\def\newaliasedtheorem#1[#2]#3{
	\newaliascnt{#1@alt}{#2}
	\newtheorem{#1}[#1@alt]{#3}
	\expandafter\newcommand\csname #1@altname\endcsname{#3}
}
\theoremstyle{plain}
\newtheorem{theorem}{Theorem}[section]
\theoremstyle{definition}
\theoremstyle{remark}
\newcommand{\R}{\mathbb{R}}
\newcommand{\N}{\mathbb{N}}
\newcommand{\HH}{\mathbb{H}}
\newcommand{\G}{\mathbb{G}}
\let\altphi\phi
\let\phi\varphi
\let\varphi\altphi
\let\altphi\undefined
\newcommand{\average}{{\mathchoice {\kern1ex\vcenter{\hrule height.4pt
width 6pt
depth0pt} \kern-9.7pt} {\kern1ex\vcenter{\hrule height.4pt width 4.3pt
depth0pt}
\kern-7pt} {} {} }}
\address{\textsc{Daniela Di Donato}: 
Dipartimento di Ingegneria Industriale e Scienze Matematiche, Via Brecce Bianche,12 60131 Ancona , Universit\'a Politecnica delle Marche.}
\email{daniela.didonato@unitn.it}
\title{Intrinsically Lipschitz  graphs on semidirect products of groups}
\date{\today}
\author{ Daniela Di Donato }
\subjclass[]{ 
	53C17, 
26A16,  
51F30, 
54E35. 
}
\keywords{Lipschitz graphs, Lie group, metric group, left-invariant distance}
\begin{document}

\maketitle

 \begin{abstract}
 In the metric spaces, we give some equivalent conditions of intrinsically Lipschitz maps introduce by Franchi, Serapioni and Serra Cassano in subRiemannian Carnot groups. Unlike what happens in the Carnot groups, in our context intrinsic dilation do not exist but we can prove  the same results using the Lipschitz property of the projection maps.  \end{abstract}

%
%

  \section{Introduction}
The notion of intrinsically Lipschitz maps was introduced by Franchi, Serapioni and Serra Cassano \cite{FSSC, FSSC03, MR2032504} (see also  \cite{SC16, FS16}) in the context of Heisenberg groups and then  in the more general Carnot groups in order to give a good notion of rectifiable sets inside these particular metric spaces. This is because  Ambrosio and Kirchheim \cite{AmbrosioKirchheimRect} show that the classical definition using Lipschitz maps given by Federer  \cite{Federer} does not work in subRiemannian Carnot groups \cite{ABB, BLU, CDPT}.  

Recently, Le Donne and the author generalize the concept of intrinsically Lipschitz maps in metric spaces \cite{DDLD21}. The difference between the two approaches is that Franchi, Serapioni and Serra Cassano study the properties of intrinsically Lipschitz maps; while we study the ''sections'' or rather the properties of the graphs that are intrinsic Lipschitz.

In a similar way of Euclidean case, Franchi, Serapioni and Serra Cassano introduce a suitable definition of intrinsic cones which is deep different to Euclidean cones and then they say that a map $\phi$ is intrinsic Lipschitz if  for any $ p\in  \mbox{graph}(\phi) $ it is possible to consider an intrinsic cone $\mathcal{C}$ with vertex on $p$ such that
\begin{equation*}
\mathcal{C} \cap \mbox{graph}(\phi) = \emptyset.
\end{equation*}
Roughly speaking, in the new approach studied in  \cite{DDLD21} a section $\psi$ is such that $\mbox{graph}(\phi) =\psi (Y) \subset X$ where $X$ is a metric space and $Y$ is a topological space. We prove some relevant properties as the Ahlfors regularity, the Ascoli-Arzel\'a Theorem, the Extension theorem, etc. in the context of metric spaces. Following this idea, the author introduce other two natural definitions: intrinsically H\"older  sections \cite{D22.1} and intrinsically quasi-isometric sections  \cite{D22.2} in metric spaces.

The purpose of this note is to give some equivalent conditions of intrinsically Lipschitz maps in the context of metric groups. More precisely, the main results  are Proposition \ref{coroll28dic}, Theorem \ref{prop1codim} and Proposition \ref{propLip}. These results are proved by Franchi and Serapioni \cite{FS16} in the context of Carnot groups; they use the properties given by the intrinsic dilations structure that do not exist in metric groups.

In particular, the term  {\em metric group} means that we are considering a topological group equipped with a left-invariant distance that induces the topology. In particular, when considering a metric Lie group, the distance would induce the manifold topology.

We shall considering   groups  that have the structure of semidirect product of two   groups. That is we consider groups of the form 
$G= N \rtimes H$ where $N $ and $ H$ are two    groups and $H$ acts on $N$ by automorphisms. Equivalently, the subgroup $N$ is normal within $ N \rtimes H$, and $ N \cap H = \{1\}$.

Another difference between metric groups and more specific Carnot groups is that, in the first setting, the projection map $\pi_{N}: N \rtimes H \to N$ is {\em Lipschitz at $1$}, i.e.,
\begin{equation}\label{def:splitat1}
d(1, \pi_{N}(g) )\leq K d(1,g), \qquad \forall g\in G.
\end{equation}
On the other hand, if $G=N \rtimes H$ is a metric group this is not true (see Remark 6.2 in  \cite{DDLD21}) but this Lipschitz property of the projection gives some good properties in order to obtain the same statements in this more general case where the intrinsic dilations structure does not exist.

 {\bf Acknowledgements.}  Part of this research was done while the author was visiting prof. Le Donne at the University of Fribourg. The excellent work atmosphere is acknowledged.

  \section{Notation}
   \subsection{Intrinsic graphs}\label{note1}

Let $N \rtimes H$ be a semidirect product of groups.
Given a subset $E\subset N$ and a  map $\phi: E \subset N \to H$ we call the {\em intrinsic graphing map} of $\phi$ the map
$\Phi: E \subset N \to N \rtimes H$ defined as
\begin{equation}\label{Phi}
\Phi (n):= n \cdot \phi (n), \quad \forall n\in E. 
\end{equation}
Moreover, we call the set
\begin{equation*}\label{graph}
  \Gamma_ {\phi}:=\{ n\cdot \phi (n) \, |\, n\in E \} = \Phi(E),
\end{equation*}
the {\em intrinsic graph of $\phi$}, which in other words is the   graph  of the intrinsic graphing function $\Phi$.
 
 A subset $S\subset  N \rtimes H$ is called an {\em intrinsic graph}, or   an {\em intrinsic $(N, H)$-graph}, if the structure of semidirect product is not clear, if there is $\phi: E \subset N \to H$ such that $S= \Gamma_ {\phi}$.
Clearly, we have that $S=\Phi (E )$ is equivalent to $S=\Gamma_ {\phi}$. If $\phi: N \to H$ is defined on whole of $N$, we say that $S=\Gamma_ {\phi}$ is an {\em entire} intrinsic graph.

By uniqueness of the components along $N$ and $H$, if $S=\Gamma_ {\phi} $ then $\phi$ is uniquely determined among all functions from $N$ to $H$.  Indeed, the set $E$ equals $\pi_N(S)$ and for all $n\in N$ we have that $
\phi(n)=\pi_H(n).$

 
 \begin{prop}\label{intr graph left translation}
The concept of intrinsic graph is preserved by left translation: 
For every $q\in G$, a set $S\subseteq N \rtimes H$ is an intrinsic graph if and only if 
$qS $ is an intrinsic graph.
 More precisely, for each $q\in G$ and $\phi: E \subset N \to H$, if we consider the set 
 \begin{equation}\label{defintraslfunct0}
 E _q :=\{ n \in N\, :\, \pi_N (q^{-1}n)\in E  \}\end{equation}
 and the map
  $\phi_q : E _q  \to H$ defined as
\begin{equation}\label{defintraslfunct}
\phi_q (n):= (\pi_H (q^{-1}n))^{-1} \phi( \pi_N (q^{-1}n)), \quad \mbox{ for all } n \in E_q,
\end{equation}
then
\begin{equation*}\label{defintraslfunct.01}
L_q ( \Gamma_ {\phi}) = \Gamma_ {\phi_q }.
\end{equation*}

 \end{prop}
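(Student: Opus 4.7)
The plan is to prove the identity $L_q(\Gamma_\phi)=\Gamma_{\phi_q}$ directly, since the ``if and only if'' statement then follows by applying it once with $q$ and once with $q^{-1}$ (using $S=q^{-1}(qS)$). The only tool I need is the uniqueness of the decomposition $g=\pi_N(g)\cdot\pi_H(g)$, valid because $N\cap H=\{1\}$ and $N$ is normal in $G=N\rtimes H$.

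For the inclusion $\Gamma_{\phi_q}\subseteq L_q(\Gamma_\phi)$, I take $n\in E_q$ and set $\tilde n:=\pi_N(q^{-1}n)\in E$. By definition of the projections, $q^{-1}n=\tilde n\cdot \pi_H(q^{-1}n)$, so $n=q\cdot \tilde n\cdot \pi_H(q^{-1}n)$. A straightforward substitution using \eqref{defintraslfunct} gives
\begin{equation*}
n\cdot\phi_q(n)=q\cdot\tilde n\cdot \pi_H(q^{-1}n)\cdot \pi_H(q^{-1}n)^{-1}\cdot\phi(\tilde n)=q\cdot\tilde n\cdot \phi(\tilde n)\in L_q(\Gamma_\phi).
\end{equation*}

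For the reverse inclusion, I start with an arbitrary element $g=q\cdot m\cdot\phi(m)$ of $L_q(\Gamma_\phi)$ with $m\in E$, and set $n:=\pi_N(g)$, $h:=\pi_H(g)$. Then $q^{-1}n=q^{-1}g\cdot h^{-1}=m\cdot\phi(m)\cdot h^{-1}$, and since $m\in N$ while $\phi(m)h^{-1}\in H$, uniqueness of the $N\cdot H$ factorisation forces $\pi_N(q^{-1}n)=m\in E$ (so $n\in E_q$) and $\pi_H(q^{-1}n)=\phi(m)h^{-1}$. Plugging into \eqref{defintraslfunct} yields
\begin{equation*}
\phi_q(n)=(\phi(m)h^{-1})^{-1}\phi(m)=h=\pi_H(g),
\end{equation*}
hence $g=n\cdot\phi_q(n)\in\Gamma_{\phi_q}$.

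There is no real obstacle here: the argument is entirely a bookkeeping exercise in the semidirect product, and the only subtlety is being careful that $\pi_N(q^{-1}n)$ is \emph{not} the same as $q^{-1}\pi_N(n)$ in general, so the decomposition must be carried out after left-multiplying by $q^{-1}$. Once the equality $L_q(\Gamma_\phi)=\Gamma_{\phi_q}$ is established, the statement that $qS$ is an intrinsic graph whenever $S$ is follows immediately, and the converse comes from applying the same result to $q^{-1}$ and $qS$.
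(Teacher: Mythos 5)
Your proof is correct and follows essentially the same route as the paper: both arguments reduce to the unique $N\cdot H$ factorisation of $q^{-1}n$, the paper writing it as a single chain of set equalities via $(\pi_H(q^{-1}n))^{-1}=n^{-1}q\,\pi_N(q^{-1}n)$, and you splitting it into two inclusions. Your version is in fact slightly more careful, since treating the reverse inclusion separately makes explicit the surjectivity of $n\mapsto\pi_N(q^{-1}n)$ from $E_q$ onto $E$, which the paper leaves implicit in its set-builder notation.
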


  \begin{proof}
Fix $q\in G$, then
\begin{equation*}
\begin{aligned}
 \Gamma_ {\phi_q } & = \{ n \phi_q (n) \, :\, n\in E_q  \} \\
 & =\{ n   (\pi_H (q^{-1}n))^{-1} \phi( \pi_N (q^{-1}n))  \, :\, n\in E_q  \}\\
  & =\{ n [ n^{-1}q \pi_N (q^{-1}n)] \phi( \pi_N (q^{-1}n))  \, :\, \pi_N (q^{-1}n) \in E  \}\\
  & = L_q ( \Gamma_ {\phi}),
\end{aligned}
\end{equation*}
as desired.
  \end{proof}
  
We observe that if $q \in \Gamma _\phi$ then $\phi_{q^{-1}} (1)= 1$ and, from  the continuity of the projections $\pi_N$ and $\pi_H$, it follows that the continuity of a function is preserved by translations. Precisely given $q\in G$ and $\phi : N \to H$, then the translated function $\phi_q$ is continuous in $n\in N$ if and only if the function $\phi$ is continuous in the corresponding point $\pi_N (q^{-1}n)$.  
Moreover, for any $p,q \in G$  it follows that
\begin{equation*}
(\phi_p)_q= \phi_{q\cdot p}
\end{equation*}
indeed, by Proposition \ref{intr graph left translation},  $\Gamma_ {(\phi_p)_q }=L_q (\Gamma_ {\phi_p })=L_q(L_p (\Gamma_ {\phi}))=L_{q\cdot p} (\Gamma_ {\phi}).$ Consequently, $(\phi_p)_{p ^{-1}}= \phi_{p ^{-1}\cdot p}= \phi.$

 \begin{rem} Let $ (G=N \rtimes H,d)$ be a metric  group and let $\phi:N \to H$ be a continuous map. Then,
 \begin{equation*}
 {\rm dist}(p, \Gamma_ {\phi}) \leq  d(1,  \pi_H (p)^{-1} \phi( \pi_N (p))), \quad \forall p \in G,
\end{equation*} 
  where  ${\rm dist}(p, \Gamma_ {\phi}):=\inf\{ d(p, q)\, :\, q  \in \Gamma_ {\phi}\}.$ This follows by left invariance of $d$; indeed, for any $p \in G$ we have that 
  \begin{equation*}
\begin{aligned}
 {\rm dist}(p, \Gamma_ {\phi}) \leq d(p, \pi_N (p) \phi ( \pi_N (p))) = d( \pi_H (p), \phi ( \pi_N (p))) =  d(1,  \pi_H (p)^{-1} \phi( \pi_N (p))).
\end{aligned}
\end{equation*}
\end{rem}

    \subsection{Intrinsically Lipschitz maps: History}
    
Regarding Carnot groups, different notions of rectifiability have been proposed in the literature:
\begin{enumerate}
\item Rectifiability using images of Lipschitz maps defined on subsets of $\R^d$;
\item  Lipschitz image  rectifiability, using homogeneous subgroups;
\item  Intrinsic Lipschitz graphs  rectifiability;
\item  Rectifiability using intrinsic $C^1$ surfaces.
\end{enumerate}

The first approach (1) is a general metric space approach, given by Federer in \cite{Federer}. He states that a $d$-dimensional rectifiable set in a Carnot group $\G$ is essentially covered by the images of Lipschitz maps from $\R^d$ to a Carnot group $\G$. Unfortunately, this definition is too restrictive because often there are only rectifiable sets of measure zero (see \cite{AmbrosioKirchheimRect, biblioMAGN1}).

Another metric space approach but more fruitful than $(1)$ in the setting of groups is given by Pauls \cite{MR2048183} (see (2)). It is called Lipschitz image (LI) rectifiability. Pauls considers images in $\G$ of Lipschitz maps defined not on $\R^d$ but on subset of homogeneous subgroups of $\G.$

 Intrinsic Lipschitz graphs (iLG) rectifiability $(3)$ and the notion of intrinsic $C^1$ surfaces $(4)$ were both introduced by Franchi, Serapioni, Serra Cassano.  
In this paper we focus our attemption on the concept $(3)$ which we will introduce in the next section.  Moreover, the notion $(4)$ adapting to groups De Giorgi's classical technique valid in Euclidean spaces to show that the boundary of a finite perimeter set can be seen as a countable union of $C^1$ regular surfaces. A set $S$ is a $d$-codimensional intrinsic $C^1$ surface $(4)$  if there exists a continuous function $f:\G \to \R^d$ such that,  locally, $$S=\{ p\in \G : f(p)=0\},$$ and the horizontal jacobian of $f$ has maximum rank, locally.
  
The approaches $(2)$ and $(3)$ are natural counterparts of the notions of rectifiability in Euclidean spaces, where their equivalence is trivial. Hence it is surprising that the connection between iLG and LI rectifiability is poorly understood already in Carnot groups of step 2. 

In \cite{ALD}, Antonelli and Le Donne prove that these two definitions are different in general; their example is for a Carnot group of step $3$.  
 The paper \cite{biblioDDFO} makes progress towards the implication iLGs are LI rectifiable in $\HH^n$. We proved that $C^{1,\alpha } $-surfaces are LI rectifiable, where $C^{1,\alpha } $-surfaces are intrinsic $C^1$ ones  whose horizontal normal is $\alpha$-H\"older continuous. 

    \subsection{Intrinsically Lipschitz maps: Definition} Let $ (G=N \rtimes H,d)$ be a metric  group.  For a map $\psi:N\to H$ we  say that $\psi$ is an 
{\em  intrinsically Lipschitz map in the FSSC sense}  if  exists $K>0$ such that
\begin{equation}\label{equation2312}
d(1, \pi_{H}(x^{-1}x') )\leq K d(1, \pi_{N}(x^{-1}x') ), \qquad \forall x,x'\in  \Gamma_\psi.
\end{equation}
Regarding the bibliography, the reader can read \cite{ASCV, ADDDLD, BCSC, BSC, BSCgraphs, biblio26, Corni19, CM20, DD19, DD20, FMS14, FSSC11, JNGV, Mag13, MV, Vittone20}.

 The idea of this paper is to generalize some properties proved in Carnot groups in metric groups using the additional hypothesis that the projection map $\pi_{N}: N \rtimes H \to N$ is Lipschitz at $1_G$ (see \eqref{def:splitat1}). In order to do this, we  conclude this section give some equivalent conditions of this fact.
 \begin{prop}[\cite{DDLD21}]\label{Defi splitting is locally Lipschitz} 
Let $ (G=N \rtimes H,d)$ be a metric  group. The following  conditions are equivalent:
 \begin{enumerate}
 \item there is $C_1>0$  such that  $\pi _H : N \rtimes H \to H$ is a  $C_1$-Lipschitz map, 
i.e., 
  \begin{equation*}\label{equ1}
d(\pi_H (g),\pi_H (p)) \leq C_1d(g,p), \quad \forall g,p \in G;
\end{equation*} 
 \item there is $C_2>0$  such that 
 \begin{equation*}\label{equ4}
d(1,\pi_H (g)) + d(1,\pi_N (g)) \leq  C_2d(1,g), \quad \forall g \in G;
\end{equation*}
\item there is $C_3>0$   such that $\pi_N$ is $C_3$-Lipschitz at $1$, i.e.,
 \begin{equation*}\label{equ2}
d(1,\pi_N (g)) \leq C_3d(1,g), \quad \forall g \in G;
\end{equation*}
\item there is $C_4>0$ such that 
 \begin{equation*}\label{equ3}
d(1,\pi_H (g)) \leq C_4d(1,g), \quad \forall g \in G;
\end{equation*} 
\item  there is $C_5>0$ such that 
 \begin{equation*}\label{equ.020}
 d(1, \pi_N (g)) \leq  C_5{\rm dist} (g^{-1}, H), \quad \forall g \in G;
\end{equation*} 
\end{enumerate} 
\end{prop}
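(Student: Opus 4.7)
The plan is to organise the five conditions around (2), which is visibly the strongest, and use the homomorphism property of $\pi_H$ together with the factorisation $g=\pi_N(g)\cdot\pi_H(g)$ and left-invariance of $d$. I would set up the implications
\[
(1)\Longleftrightarrow(4),\qquad (3)\Longleftrightarrow(4),\qquad (2)\Longleftrightarrow(3)\text{ and }(4),\qquad (3)\Longleftrightarrow(5).
\]

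For $(1)\Leftrightarrow(4)$, the key remark is that in a semidirect product $N\rtimes H$ the projection $\pi_H$ is a group homomorphism (it is the quotient map modulo the normal subgroup $N$), so by left-invariance
\[
d(\pi_H(g),\pi_H(p))=d(1,\pi_H(g)^{-1}\pi_H(p))=d(1,\pi_H(g^{-1}p)),
\]
and (1) with $p=1$ is exactly (4), while (4) applied to $g^{-1}p$ gives (1).

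For $(3)\Leftrightarrow(4)$ I would exploit $g=\pi_N(g)\pi_H(g)$. Setting $n=\pi_N(g)$, $h=\pi_H(g)$, left-invariance gives $d(1,h)=d(n,g)\le d(n,1)+d(1,g)$ and $d(1,n)=d(1,gh^{-1})\le d(1,g)+d(1,h)$, which yields (3) from (4) and vice versa, each with an extra additive constant of $1$ in the Lipschitz constant. Then $(2)\Leftrightarrow(3)+(4)$ is just addition and trivial inequalities.

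Finally $(3)\Leftrightarrow(5)$ uses that right multiplication by elements of $H$ fixes the $N$-component: $\pi_N(gh)=\pi_N(g)$ for every $h\in H$, because if $g=n h_0$ then $gh=n(h_0 h)$. Hence for every $h\in H$,
\[
d(1,\pi_N(g))=d(1,\pi_N(gh))\le C_3\,d(1,gh)=C_3\,d(g^{-1},h),
\]
and taking the infimum over $h\in H$ gives (5) from (3). The converse (5)$\Rightarrow$(3) is immediate since $1\in H$, so $\mathrm{dist}(g^{-1},H)\le d(g^{-1},1)=d(1,g)$.

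The only nontrivial point is the verification that $\pi_H$ is a homomorphism in a semidirect product and that $\pi_N$ is right-$H$-invariant; everything else is bookkeeping with the triangle inequality and left-invariance, so I don't expect any real obstacle. The proof can be presented cleanly as the four equivalences above rather than as a cycle, since all five statements turn out to be logically adjacent to (3) and (4).
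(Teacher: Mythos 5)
The paper does not prove this proposition at all: it is imported from \cite{DDLD21} as a black box, so there is no in-paper argument to compare yours against. Judged on its own, your proof is correct and complete, and it uses exactly the tools one would expect: $\pi_H$ is a homomorphism because $N$ is normal (which gives $(1)\Leftrightarrow(4)$ by left-invariance), the factorisation $g=\pi_N(g)\pi_H(g)$ plus the triangle inequality gives $(3)\Leftrightarrow(4)$ and hence $(2)$, and the right-$H$-invariance $\pi_N(gh)=\pi_N(g)$ gives $(3)\Leftrightarrow(5)$; all the constants are explicit and finite. The one point you should make explicit concerns the meaning of ${\rm dist}(g^{-1},H)$ in item (5). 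Your chain ends with $\inf_{h\in H}d(g^{-1},h)$, i.e.\ the ordinary metric distance from the \emph{point} $g^{-1}$ to the set $H$; this is the intended reading, and it is the one the paper actually uses when it invokes (5) and when it bounds ${\rm dist}(g^{-1},H)\le d(g^{-1},\pi_H(g^{-1}))$ in the proof of Proposition \ref{FMSprop2.3.2NUOVO}. It is, however, \emph{not} what one gets by substituting $g^{-1}$ into the displayed formula of Definition \ref{FMSdefi2.3.1}, which reads ${\rm dist}(p,H)=\inf\{d(1,pq):q\in H\}$ and would yield $\inf_{q}d(1,g^{-1}q)=\inf_q d(g,q)$. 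Under that literal reading the $N$-components of the products $q^{-1}g$ are the conjugates $C_{q^{-1}}(\pi_N(g))$ rather than $\pi_N(g)$ itself, and your argument for $(3)\Rightarrow(5)$ would no longer close (nor, as far as I can see, would the implication hold by any equally elementary route). So add one sentence fixing the convention ${\rm dist}(g^{-1},H)=\inf_{h\in H}d(g^{-1},h)$ before taking the infimum; with that, the proof stands.
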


  \section{Intrinsic cones}
  
    \subsection{Intrinsic cones}\label{Intrinsic cones29dic}
  In this section, we present two definitions of cone which generalize the ones given by Franchi, Serapioni and Serra Cassano in the context of Carnot groups. The reader can see \cite{SC16, FS16} and their references. In particular, Definition~\ref{FMSdefi2.3.1}  is more general than Definition~\ref{defiIntrCONE}  because it does not require that $H$ is a complemented subgroup. Proposition~\ref{FMSprop2.3.2NUOVO} states that the equivalence of these two definitions when $\pi_H $ is a Lipschitz map.

\begin{defi}[Intrinsic cone] \label{FMSdefi2.3.1}
Let $ (G,d)$ be a metric  group and let $H$ be a subgroup of $G.$ The cones $ X_H( \alpha )$ with axis $H$, vertex $1$, opening $\alpha \in [0,1]$ are defined as
\begin{equation*}
X_H(\alpha)=\{ g \in G \, :\, {\rm dist} (g^{-1},H) \leq \alpha d(1,g) \}.
\end{equation*}
where dist$(g,H):=\inf\{d(1,gq)\, :\, q\in H\}$. For any $p\in G,$ $p\cdot X_H( \alpha )$ is the cone with base $N,$ axis $H,$ vertex $p,$ opening  $\alpha .$
\end{defi}

\begin{rem}
Notice that $G= X_H(1 )$ and $X_H(0)=H.$
\end{rem}

\begin{defi}[Intrinsic cone]\label{defiIntrCONE}
Let $ (N \rtimes H,d)$ be a metric  group, $q\in N \rtimes H$ and $\alpha \geq 0$. We define the cones  $C_{N,H} (\alpha)$ with base $N,$ axis $H,$ vertex $1,$ opening  $\alpha $ as following
\begin{equation*}\label{intrcones}
 C_{N,H} ( \alpha):= \{p\in G \,:\, d(1, \pi_N (p) ) \leq \alpha  d(1, \pi_H (p))\},
\end{equation*}
and $p\cdot C_{N,H} ( \alpha)$ is the cone with base $N,$ axis $H,$ vertex $p,$ opening  $\alpha .$
\end{defi}

\begin{rem}\label{27remsuiconi}
Notice that $H=C_{N,H} ( 0),$ $N \rtimes H = \overline{ \cup _{\alpha > 0} C_{N,H} ( \alpha)}$ and $C_{N,H} ( \alpha _1) \subset C_{N,H} ( \alpha _2)$ for $\alpha _1<\alpha _2.$ 
\end{rem}

\begin{rem}
Let $p\in  C_{N,H} ( \alpha)$ and $k \in \N$ with $k\geq 2.$ Then $p^k \in C_{N,H} (k^2+k( \alpha -1)).$ Indeed, for $p=nh$ with $h\in H$ and $ n\in N$, an explicit computation gives that 
\begin{equation*}
\pi_H(p^k)=h^k \quad \mbox{and} \quad \pi_N(p^k)=n \prod_{j=1}^{k-1} C_{h^j}(n),
\end{equation*}
and, consequently,
\begin{equation*}
d(1,\pi_N(p^k)) \leq kd(1,n)+2\sum_{j=1}^{k-1} j d(1,h) \leq [k^2+k( \alpha -1)] d(1,h),
\end{equation*}
i.e., $p^k \in C_{N,H} (k^2+k( \alpha -1)),$ as wished.  
\end{rem}

Before to investigate regarding the equivalence between these two definitions we present a result  which we will use in Section  \ref{Some equivalent conditions of intrinsically Lipschitz maps}:
 \begin{prop}[\cite{DDLD21}] \label{corol8.8}
Let $G=N\cdot H$ be a metric group such that $\pi_{N} $ is $k$-Lipschitz at $1$.   Let $\psi:N \to H$, $n \in N$ and $p =n \phi(n).$ Then the following statements are equivalent:
 \begin{enumerate}
 \item $\phi$ is intrinsically $L$-Lipschitz at point $n \in N$ with respect to  $d$ and with constant $L>0;$
\item  for all $\hat L\geq (k+1)L,$ it holds $$p\cdot X_{H} (1/\hat L)  \cap \Gamma_\phi =  \emptyset,$$
where $p\cdot X_{H}( \alpha )$ is the cone with  axis $H,$ vertex $p,$ opening  $\alpha $ defined as the translation of 
\begin{equation*} 
X_{H}(\alpha) =\{ g \in G \, :\, {\rm dist} (1,gH) < \alpha d(1,g) \}
\end{equation*}
where dist$(1,gH):=\inf\{d(1,g q)\, :\, q\in H\}$. 
\end{enumerate} 
\end{prop}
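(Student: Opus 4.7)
\emph{Plan.} My first move is to translate by $p^{-1}$ so that both conditions refer to an arbitrary $g := p^{-1} x'$ with $x' \in \Gamma_\phi$: condition (1) becomes $d(1,\pi_H(g)) \le L\, d(1,\pi_N(g))$, while by left-invariance of $d$ condition (2) becomes ${\rm dist}(1, gH) \ge \frac{1}{\hat L}\, d(1,g)$. The bridge between these two reformulations --- one phrased through the projections, the other through the coset distance --- will be the sandwich
\[
  \frac{1}{k}\, d(1,\pi_N(g)) \;\le\; {\rm dist}(1, gH) \;\le\; d(1,\pi_N(g)),
\]
which I would establish first: the upper bound is witnessed by $g\pi_H(g)^{-1} = \pi_N(g) \in gH$, and the lower bound follows because every $gh \in gH$ satisfies $\pi_N(gh) = \pi_N(g)$ by the semidirect product structure, so that the $k$-Lipschitz-at-$1$ hypothesis on $\pi_N$ gives $d(1,\pi_N(g)) \le k\,d(1,gh)$, which upon taking the infimum over $h\in H$ is the claim.

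\emph{The two implications.} Once the sandwich is in hand, the two implications become triangle-inequality book-keeping. For (1)$\Rightarrow$(2), I would combine $d(1,\pi_H(g)) \le L\, d(1,\pi_N(g))$ with $d(1,g) \le d(1,\pi_N(g)) + d(1,\pi_H(g))$ to obtain $d(1,g) \le (1+L)\, d(1,\pi_N(g))$, and the lower sandwich bound then yields ${\rm dist}(1,gH) \ge \frac{1}{k(1+L)}\, d(1,g)$, so that $g \notin X_H(1/\hat L)$ whenever $\hat L$ is at least the stated $(k+1)L$. For (2)$\Rightarrow$(1), I would use the upper sandwich bound together with the cone hypothesis to get $d(1,\pi_N(g)) \ge {\rm dist}(1,gH) \ge \frac{1}{\hat L}\, d(1,g)$; the triangle inequality $d(1,\pi_H(g)) \le d(1,g) + d(1,\pi_N(g))$ then gives $d(1,\pi_H(g)) \le (\hat L+1)\, d(1,\pi_N(g))$, which is the intrinsic Lipschitz inequality at $n$.

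\emph{Main obstacle.} The only real content is the sandwich inequality itself: it is the point where the hypothesis that $\pi_N$ is Lipschitz-at-$1$ has to substitute for the intrinsic-dilation argument used in the Carnot group case of \cite{FS16}. After that, each direction reduces to a single application of the triangle inequality, and the remaining work is constant-chasing to reconcile the coefficients coming out of the two implications with the factor $(k+1)L$ appearing in the statement.
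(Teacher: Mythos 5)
The paper does not actually prove Proposition \ref{corol8.8}: it is imported from \cite{DDLD21}, so there is no in-house argument to compare against. Judged on its own, your plan is the right one and the individual steps check out: the sandwich $\frac{1}{k}\,d(1,\pi_N(g)) \le \mathrm{dist}(1,gH) \le d(1,\pi_N(g))$ is correct (the upper bound via the witness $q=\pi_H(g)^{-1}$, the lower bound because $\pi_N(gh)=\pi_N(g)$ for all $h\in H$), and it is exactly the replacement for the dilation argument that the paper itself uses elsewhere in the guise of Proposition \ref{Defi splitting is locally Lipschitz}\,(5). The triangle inequalities you invoke are also legitimate for a merely left-invariant metric, via $d(1,nh)\le d(1,n)+d(n,nh)=d(1,n)+d(1,h)$ and $d(1,h)=d(n,nh)\le d(n,1)+d(1,nh)$; it would be worth writing these out, since for a non-bi-invariant $d$ one must avoid the tempting but false identity $d(nh,h)=d(1,n)$.

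The one point you cannot defer as mere ``constant-chasing'' is the mismatch with the stated threshold. Your forward implication yields $d(1,g)\le (1+L)\,d(1,\pi_N(g))\le k(1+L)\,\mathrm{dist}(1,gH)$, hence emptiness of the intersection only for $\hat L\ge k(1+L)=k+kL$, whereas the statement asserts it for all $\hat L\ge (k+1)L=L+kL$. Since $k\ge 1$ automatically (test the Lipschitz-at-$1$ condition on elements of $N$) while $L$ may be small, one can have $(k+1)L<k(1+L)$, and then your argument says nothing about $\hat L\in[(k+1)L,\,k(1+L))$. Either the threshold must be read as $k(1+L)$ (or, more robustly, ``for all sufficiently large $\hat L$''), or an estimate sharper than the two-term triangle inequality is needed; your proof as written does not supply one. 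Likewise your converse recovers intrinsic Lipschitzness only with constant $\hat L+1$, not $L$, so the proposition can only be an equivalence with quantitative loss in the constants --- this should be stated explicitly rather than left implicit.
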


%
%

Locally, the intrinsic cone $p\cdot C_{N,H} (\beta)$ is equivalent to  $p\cdot X_H(\alpha)$ when  $\pi _H$ is a  Lipschitz map:

\begin{prop}\label{FMSprop2.3.2NUOVO}
 Assume that $ (G= N \rtimes H,d)$ is a metric  group, $p\in G$ and $\pi _H : G \to H$ is a  $C$-Lipschitz map. Then, for any $0<\alpha _1 <\frac 1  {C+1}$ there is  $\beta _1>0$  
 such that locally
 \begin{equation*}
p\cdot X_H( \alpha _1) \subset p\cdot C_{N,H} (\beta _1),
 \end{equation*}
 and for any $0<\beta _2 < \frac 1 C$ there is $\alpha _2 \in (0,1)$ such that  locally
 \begin{equation*}
p\cdot C_{N,H} (\beta _2) \subset p\cdot X_H( \alpha _2).
 \end{equation*} 
\end{prop}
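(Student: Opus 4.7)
The plan is to reduce both inclusions to the case $p=1_G$ by left-invariance of $d$, since both cone families are just left-translates of the respective cones at the identity. The two basic ingredients are the triangle inequality $d(1,g)\leq d(1,\pi_N(g))+d(1,\pi_H(g))$, coming from $g=\pi_N(g)\pi_H(g)$ together with left-invariance, and Proposition~\ref{Defi splitting is locally Lipschitz}, which under the $C$-Lipschitz hypothesis on $\pi_H$ yields the control $d(1,\pi_N(g))\leq C_5\,\mathrm{dist}(g^{-1},H)$ for all $g\in G$ (with $C_5$ computable from $C$; tracking constants one can take $C_5=1+C$, which matches the threshold in the statement).

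For the first inclusion I would combine item~(5) of Proposition~\ref{Defi splitting is locally Lipschitz} with the defining cone inequality $\mathrm{dist}(g^{-1},H)\leq \alpha_1 d(1,g)$ and the triangle inequality above to obtain
\begin{equation*}
d(1,\pi_N(g))\leq (1+C)\alpha_1\bigl(d(1,\pi_N(g))+d(1,\pi_H(g))\bigr).
\end{equation*}
Since $\alpha_1<1/(1+C)$ by hypothesis, the coefficient $1-(1+C)\alpha_1$ is positive, and solving for $d(1,\pi_N(g))$ yields $\beta_1:=(1+C)\alpha_1/(1-(1+C)\alpha_1)$, hence $g\in C_{N,H}(\beta_1)$. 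Every step is global, so this inclusion in fact holds globally (the word ``locally'' is harmless in this direction).

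For the second inclusion I would start from $\mathrm{dist}(g^{-1},H)\leq d(1,g^{-1}\pi_H(g))$, choosing $q=\pi_H(g)$ in the infimum defining $\mathrm{dist}$; the element $g^{-1}\pi_H(g)=\pi_H(g)^{-1}\pi_N(g)^{-1}\pi_H(g)$ is a conjugate of $\pi_N(g)^{-1}$ and lies in $N$. The main obstacle is to control $d(1,g^{-1}\pi_H(g))$ by a near-constant multiple of $d(1,\pi_N(g))$, because in a general left-invariant metric conjugation by $\pi_H(g)$ need not be an isometry, so no global bound with a universal constant is available. This is precisely where the qualifier ``locally'' enters: for $g$ in a sufficiently small ball around $1$, joint continuity of conjugation gives $d(1,g^{-1}\pi_H(g))\leq (1+\eps)\,d(1,\pi_N(g))$ for any prescribed $\eps>0$. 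Combined with the cone hypothesis and $d(1,\pi_H(g))\leq C d(1,g)$, one obtains $\mathrm{dist}(g^{-1},H)\leq (1+\eps)\beta_2 C\,d(1,g)$, so that the constraint $\beta_2<1/C$ allows the choice $\alpha_2:=(1+\eps)\beta_2 C<1$. The conceptual heart of the proof lies in this locality argument; the remaining steps are routine triangle-inequality manipulations.
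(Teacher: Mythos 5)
Your first inclusion is exactly the paper's argument: Proposition \ref{Defi splitting is locally Lipschitz}(5) with $C_5=C+1$, the cone inequality, the triangle inequality $d(1,g)\le d(1,\pi_N(g))+d(1,\pi_H(g))$, and solving for $d(1,\pi_N(g))$ to get $\beta_1=\alpha_1(C+1)/(1-\alpha_1(C+1))$; there is nothing to object to there, and you are right that it is global.

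The second inclusion, however, has a genuine gap. The step $d(1,g^{-1}\pi_H(g))\le(1+\varepsilon)\,d(1,\pi_N(g))$ ``by joint continuity of conjugation'' is not available: continuity of $(h,n)\mapsto C_{h^{-1}}(n^{-1})$ only gives that this element tends to $1$, not a multiplicative bound, and no neighbourhood of the identity makes such a bound true in general. Concretely, in $\mathbb{H}^1$ with a homogeneous left-invariant distance, take $N=\{(0,y,t)\}$ and $H=\{(x,0,0)\}$, so that $\pi_H$ is Lipschitz; conjugating $n=(0,y,0)$ by $h=(a,0,0)$ creates a vertical component of size $\sim a|y|$, whence $d(1,C_h(n))\approx |y|+(a|y|)^{1/2}$ while $d(1,n)=|y|$. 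The ratio blows up as $y\to 0$ with $a$ fixed, and the offending points $nh$ with $|y|\ll a$ are precisely the ones lying deep inside $C_{N,H}(\beta_2)$, so shrinking the ball cannot rescue the estimate. The paper avoids conjugation altogether: the quantity it actually bounds is $\inf_{q\in H}d(1,gq)=\inf_{q\in H}d(g^{-1},q)$, the distance from the \emph{point} $g^{-1}$ to $H$ (this is the convention used in Proposition \ref{corol8.8} and implicitly in Proposition \ref{Defi splitting is locally Lipschitz}(5), and it differs from your literal reading of Definition \ref{FMSdefi2.3.1} by the substitution $g\mapsto g^{-1}$). With that reading the competitor $q=\pi_H(g)^{-1}$ gives
\begin{equation*}
{\rm dist}(g^{-1},H)\le d\bigl(1,g\,\pi_H(g)^{-1}\bigr)=d\bigl(1,\pi_N(g)\bigr)\le \beta_2\, d\bigl(1,\pi_H(g)\bigr)\le \beta_2 C\, d(1,g),
\end{equation*}
exactly and globally, with no $\varepsilon$ and no locality, yielding $\alpha_2=\beta_2 C$. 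If you insist on the literal formula ${\rm dist}(g^{-1},H)=\inf_q d(1,g^{-1}q)$, every choice of $q\in H$ leaves you with a conjugate $C_{h^{-1}}(n^{-1})$ times an element of $H$, so the conjugation problem is intrinsic to that reading and your argument cannot be repaired within it; the fix is to use the other (coset) convention, not a continuity argument.
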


\begin{proof}
 It is enough to prove the claim with $p=1$ because of the left translation of the distance $d.$  
 
We prove the first inclusion. Let $g \in X_H(\alpha _1),$ i.e.,  $ \mbox{dist} (g^{-1},H) \leq \alpha _1 d(1,g).$ Using Proposition \ref{Defi splitting is locally Lipschitz} (5) and noting $C_5=C+1,$ we have that
 \begin{equation*}
\begin{aligned}
 d(1, \pi_N (g)) & \leq  C_5{\rm dist} (g^{-1}, H) \leq  \alpha _1 C_5d(1,g) \leq \alpha _1(C+1) (d(1,\pi_N (g)) +d(1, \pi_H (g))) .
\end{aligned}
\end{equation*}
Hence we can choose $\beta _1$ so that $\beta _1 \geq \frac{\alpha _1(C+1)}{1-\alpha _1(C+1)}.$ Consequently,  $g\in C_{N,H} (\beta _1),$ as desired. 

Now we prove the second inclusion. Let $g \in  C_{N,H} (\beta _2),$ i.e., $d(1, \pi _N(g)) \leq \beta _2d(1, \pi_H(g)).$ Then, by Proposition \ref{Defi splitting is locally Lipschitz} (4)
 \begin{equation*}
\begin{aligned}
\mbox{dist} (g^{-1},H) & \leq  d (g^{-1}, \pi_H (g^{-1})) = d(1,n ^{-1}) = d(1,n)\leq  \beta _2d(1,\pi_H(g)) \leq \beta _2Cd(1,g).
\end{aligned}
\end{equation*}
Hence, if we choose $\alpha _2 = \beta _2 C$, we obtain that $g \in X_H(\alpha_2)$ and the proof is complete.
\end{proof} 

A corollary of Proposition \ref{FMSprop2.3.2NUOVO} is the following result

 \begin{prop}\label{coroll28dic} Let $ (G=N \rtimes H,d)$ be a metric  group with $\pi_H :G \to H$ Lipschitz map.   Let $\phi:N \to H$, $m \in N$ and $p =m \phi(m).$ Then the following statements are equivalent:
 \begin{enumerate}
 \item $\phi$ is intrinsically $L$-Lipschitz at point $m \in N$ with respect to  $d$ and with constant $L>0;$
\item  there is $\alpha \in (0,1)$ such that  $$p\cdot C_{N,H} (\alpha ) \cap \Gamma_{\phi } =\{p\}.$$
\end{enumerate} 
\end{prop}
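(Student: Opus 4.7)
The plan is to chain together two equivalences already available in the paper. Since $\pi_H$ is Lipschitz, Proposition \ref{Defi splitting is locally Lipschitz} yields that $\pi_N$ is $k$-Lipschitz at $1$ for some $k>0$, so the hypothesis of Proposition \ref{corol8.8} is met. That proposition converts the intrinsic Lipschitz condition at $m$ into a cone condition involving the axial cones $X_H$, while Proposition \ref{FMSprop2.3.2NUOVO} compares the cones $X_H$ and $C_{N,H}$ under the same Lipschitz hypothesis on $\pi_H$. Composing the two should give exactly the desired equivalence.

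For the implication $(1) \Rightarrow (2)$: starting from the intrinsic $L$-Lipschitz condition at $m$, Proposition \ref{corol8.8} gives $p \cdot X_H(1/\hat L) \cap \Gamma_\phi = \emptyset$ for every $\hat L \geq (k+1)L$. I fix one such $\hat L$ and use the second inclusion of Proposition \ref{FMSprop2.3.2NUOVO} to select $\alpha \in (0, 1/C)$ small enough that the resulting opening $\alpha_2 = \alpha C$ is strictly less than $1/\hat L$; this forces $C_{N,H}(\alpha) \setminus \{1\} \subset X_H(1/\hat L)$ in the strict sense used by Proposition \ref{corol8.8}. Consequently $p \cdot C_{N,H}(\alpha) \cap \Gamma_\phi \subseteq \{p\}$, and since $1 \in C_{N,H}(\alpha)$ trivially yields $p \in p \cdot C_{N,H}(\alpha) \cap \Gamma_\phi$, equality follows.

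For the implication $(2) \Rightarrow (1)$: given $\alpha$ as in $(2)$, I apply the first inclusion of Proposition \ref{FMSprop2.3.2NUOVO} with $\alpha_1 \in (0, 1/(C+1))$ chosen small enough that the resulting $\beta_1 = \alpha_1(C+1)/(1-\alpha_1(C+1))$ satisfies $\beta_1 \leq \alpha$, so that $X_H(\alpha_1) \subset C_{N,H}(\alpha)$. Since $1 \notin X_H(\alpha_1)$ in the strict version, we conclude $p \cdot X_H(\alpha_1) \cap \Gamma_\phi \subseteq p \cdot C_{N,H}(\alpha) \cap \Gamma_\phi \setminus \{p\} = \emptyset$, and Proposition \ref{corol8.8} then gives that $\phi$ is intrinsically $L$-Lipschitz at $m$ with, for instance, $L = 1/((k+1)\alpha_1)$.

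The main delicacy I expect is the bookkeeping between the two variants of $X_H$ appearing in the paper: the non-strict version of Definition \ref{FMSdefi2.3.1} (also used in Proposition \ref{FMSprop2.3.2NUOVO}) and the strict version of Proposition \ref{corol8.8}. The two differ precisely by the vertex $1$, and this single point is exactly what produces the singleton $\{p\}$ in condition $(2)$, so one must trade openings (shrinking the contained cone slightly) to convert a non-strict inclusion into a strict one on $G \setminus \{1\}$. Beyond this, the remaining work is routine arithmetic on the cone openings, and no further geometric input is needed.
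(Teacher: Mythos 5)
Your proposal is correct and follows exactly the paper's own (one-line) proof, which likewise just combines Proposition \ref{corol8.8} with Proposition \ref{FMSprop2.3.2NUOVO}; you have simply made explicit the bookkeeping of cone openings and the strict/non-strict distinction that the paper leaves implicit.
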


\begin{proof}
	It is enough to combine Proposition \ref{corol8.8}  and Proposition \ref{FMSprop2.3.2NUOVO}.  
\end{proof}

\subsection{Intrinsic right and left cones} 

Notice that 
\begin{equation*}
 G =N \rtimes H \mbox{ if and only if } G =H \ltimes N,
\end{equation*}
it is natural to consider left and right cones as in  \cite{ACM12} where the authors consider them in the context of Heisenberg groups. Here we introduce these cones and then we study some properties and their link. As in Definition \ref{defiIntrCONE}, the left cone is $$C^\ell_{N,H} ( \alpha) \equiv C_{N,H} ( \alpha)= \{p\in G=N \rtimes H\,:\, d(1, \pi_N (p) ) \leq \alpha  d(1, \pi_H (p))\}.$$  on the other hand, the right cone $C^r_{N,H} (\alpha)$ with base $N,$ axis $H,$ vertex $1,$ opening  $\alpha $ is defined as following
\begin{equation}\label{intrcones27dic}
 C^r_{N,H} ( \alpha):= \{p\in G=H \ltimes N \,:\, d(1, \tilde \pi_N (p) ) \leq \alpha  d(1, \tilde \pi_H (p))\},
\end{equation}
where $ \tilde \pi_N : H \ltimes N \to N$ and $ \tilde \pi_H : H \ltimes N \to H$ are the natural projections on $G$ considering the splitting $H \ltimes N.$
 The right cones with vertex $p\in G$ are defined by left translation, i.e., $p\cdot C^r_{N,H} ( \alpha)$ is the cone with base $N,$ axis $H,$ vertex $p,$ opening  $\alpha .$

The left and right cones are comparable in the following sense:
 \begin{prop}\label{leftright}
  Let $ (G=N \rtimes H,d)$ be a metric  group. For any $p\in G$ and $\alpha , \beta \geq 0,$ it holds 
  \begin{equation*}
  \begin{aligned}
p\cdot C^\ell_{N,H} ( \alpha) & \subset p\cdot C^r_{N,H} ( \alpha +2 ), \\
p\cdot C^r_{N,H} ( \beta )  &\subset p\cdot C^\ell_{N,H} ( \beta +2).
\end{aligned}
\end{equation*}

  \end{prop}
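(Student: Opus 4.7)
By the left-invariance of $d$, it suffices to treat the case $p = 1$: both cones with vertex $p$ are defined as the left-translate of the respective cones at $1$, so translating by $p^{-1}$ reduces the problem. I would then first record the key algebraic identity relating the two decompositions. If $g \in G$ and we write $g = n h$ with $n = \pi_N(g) \in N$ and $h = \pi_H(g) \in H$, then since $N$ is normal we have $g = h \cdot (h^{-1} n h)$ with $h^{-1} n h \in N$. By the uniqueness of the factorization $G = H \ltimes N$, this forces
\begin{equation*}
\tilde\pi_H(g) = h = \pi_H(g), \qquad \tilde\pi_N(g) = h^{-1} n h.
\end{equation*}
In particular the axis-components agree: $d(1,\tilde\pi_H(g)) = d(1,\pi_H(g))$. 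Reversing the roles (starting from $g = h' n'$ with $h' = \tilde\pi_H(g)$, $n' = \tilde\pi_N(g)$, and writing $g = (h' n' h'^{-1}) h'$) gives the symmetric identity $\pi_N(g) = h' n' h'^{-1}$.

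Next I would estimate the norm of a conjugate using left-invariance and symmetry of $d$. For any $h \in H$ and $n \in N$, two triangle inequalities give
\begin{equation*}
d(1, h^{-1} n h) = d(h, n h) \leq d(h,1) + d(1, n h) \leq d(1,h) + d(1,n) + d(n, nh) = 2 d(1,h) + d(1,n),
\end{equation*}
and the analogous bound $d(1, h n h^{-1}) \leq 2 d(1,h) + d(1,n)$.

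Combining the two ingredients finishes the argument. For the first inclusion, assume $g \in C^\ell_{N,H}(\alpha)$, so $d(1,\pi_N(g)) \leq \alpha\, d(1,\pi_H(g))$. Applying the conjugation estimate to $n = \pi_N(g)$ and $h = \pi_H(g)$ yields
\begin{equation*}
d(1,\tilde\pi_N(g)) \leq 2 d(1,\pi_H(g)) + d(1,\pi_N(g)) \leq (\alpha+2)\, d(1,\pi_H(g)) = (\alpha+2)\, d(1,\tilde\pi_H(g)),
\end{equation*}
so $g \in C^r_{N,H}(\alpha+2)$. The second inclusion is obtained by applying the mirror estimate to the $H \ltimes N$ decomposition.

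The argument is almost entirely algebraic plus triangle inequalities; I do not expect a serious obstacle. The only subtle point is making sure the identification $\tilde\pi_H(g) = \pi_H(g)$ is set up correctly from the normality of $N$, and being careful with $d(1, g^{-1}) = d(1,g)$ when rewriting $d(\tilde\pi_H(g), g)$. Once these are in place, the bounds are immediate and the constant $+2$ arises naturally as the two triangle-inequality steps needed to move from a conjugate to its untwisted version.
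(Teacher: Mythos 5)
Your proposal is correct and follows essentially the same route as the paper: reduce to $p=1$ by left-invariance, identify the two factorizations via normality of $N$ (so that $\tilde\pi_H = \pi_H$ and $\tilde\pi_N(g) = h^{-1}\pi_N(g)h$), and bound the conjugate by $d(1,h^{-1}nh) \leq 2d(1,h)+d(1,n)$ using two triangle inequalities, which is exactly where the paper's constant $+2$ comes from. No issues.
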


 \begin{proof} Pick $\alpha \geq 0.$ 
 By left translation invariant, it is sufficient to show that 
  \begin{equation}\label{coni_sindestri}
 C^\ell_{N,H} ( \alpha) \subset C^r_{N,H} ( \alpha +2 ) \subset  C^\ell_{N,H} ( \alpha +4).
\end{equation} 

We begin observing a simple property of the projections. Let $p\in G.$ By uniqueness of the components along $N$ and $H,$ we know that $p= nh\in  N \rtimes H$ with $n \in N$ and $h \in H.$ On the other hand, because $G =H \ltimes N$ we have that $p=\ell m$ with $m \in N$ and $\ell \in H.$  Hence,
\begin{equation*}
nh=\ell m,
\end{equation*}
and so, by uniqueness of the components along $N$ and $H,$ we deduce that $$nh= \pi_N(\ell m) \pi_H(\ell m)= \pi_N(\ell m \ell ^{-1}\ell ) \pi_H(\ell m \ell ^{-1} \ell )=C_\ell (m) \ell.$$

That means $h=\ell$ and $n=C_h (m).$

Now, we prove the first inclusion in \eqref{coni_sindestri}. Let $p \in G$ as above and such that $p \in C^\ell_{N,H} ( \alpha).$ Then, by definition of the left cone we have  $d(1,n) \leq \alpha d(1,h)$ and, consequently,
\begin{equation*}
\begin{aligned}
d(1, m) &= d (1,h^{-1}  C_h (m) h) \leq  d (1, C_h (m) ) + 2 d (1, h) = d (1, n ) + 2 d (1, h) \leq (\alpha + 2)  d (1, h),
\end{aligned}
\end{equation*}
i.e. $p\in C^r_{N,H} ( \alpha +2 ),$ as desired. In a similar way, it is possible to show the second inclusion in \eqref{coni_sindestri}.

  \end{proof}

 \begin{rem}
 We underline that the projections in \eqref{intrcones27dic} are different with respect to the projections $\pi$ given by the splitting $G=N \rtimes H.$ On the other hand, as proved in the last proposition, when $N$ is normal, $$ \tilde \pi_H = \pi_H.$$ 
 \end{rem}

 \begin{rem} Let $\alpha  \geq 0.$  Then, $C^\ell_{N,H} ( \alpha ) =(C^r_{N,H} ( \alpha))^{-1}.$ 
Indeed,
\begin{equation*}
\begin{aligned}
nh \in C^\ell_{N,H} ( \alpha )\, & \Longleftrightarrow \, d(1,n) \leq \alpha d(1,h)\, \Longleftrightarrow  \, d(1,n^{-1}) \leq \alpha d(1,h^{-1} )\\
 &  \Longleftrightarrow  \, h^{-1} n^{-1} \in C^r_{N,H} ( \alpha)\,  \Longleftrightarrow \,  (nh)^{-1} \in C^r_{N,H} ( \alpha).
\end{aligned}
\end{equation*}
 \end{rem}

  \subsection{1-codimensional intrinsically Lipschitz maps}\label{1-codimensional intrinsically Lipschitz maps}

Let $G=N \rtimes H$ be a metric Lie group with $H$ 1-dimensional. Then there is $V \in \mathfrak g$ such that $H=\{ \exp (tV) \,: \, t\in \R \}.$

Denote by $S^+_G (N,H)$ and $S^-_G (N,H)$ the halfspaces
\begin{equation*}
\begin{aligned}
S^+_G (N,H) := \{ g \in G\, :\, \pi_H(g) = \exp (tV), \mbox{with } t\geq 0 \},\\
S^-_G (N,H) := \{ g \in G\, :\, \pi_H (g)= \exp (tV), \mbox{with } t\leq 0 \}.\\
\end{aligned}
\end{equation*}
Let $p\in N \rtimes H$ and $\alpha \geq 0$ and we consider the intrinsic cone $p\cdot C_{N,H} ( \alpha)$ with 1-dimensional axis $H$ as in Definition \ref{defiIntrCONE}. Then we denote
\begin{equation*}
\begin{aligned}
p\cdot C^+_{N,H} ( \alpha) := (p\cdot C_{N,H} ( \alpha)) \cap S^+_G (N,H),\\
p\cdot C^-_{N,H} ( \alpha) := (p\cdot  C_{N,H} ( \alpha) )\cap S^-_G (N,H).\\
\end{aligned}
\end{equation*}

We can characterize $H$-valued intrinsically Lipschitz functions using the fact that subgraphs and supergraphs contain half cones. Precisely, for  $\phi:N \to H$, with $\phi(n)= \exp (f(n)V)$ and $f:N \to \R$, we define the supergraph  $E^+_\phi$ and the subgraph $E^-_\phi$ of $\phi$ as
\begin{equation*}
\begin{aligned}
E^+_\phi := \{ n\exp (tV) \in G\, :\, n\in N, t >f(n) \},\\
E^-_\phi :=  \{ n\exp (tV) \in G\, :\, n\in N, t <f(n) \}.\\
\end{aligned}
\end{equation*}

Notice that if $\phi$ is a continuous map, then  
\begin{equation*}
\begin{aligned}
\overline{E^+_\phi }= \{ n\exp (tV) \, :\, n\in N, t \geq f(n) \},\quad \overline{E^-_\phi }=\{ n\exp (tV) \, :\, n\in N, t \leq f(n) \}
\end{aligned}
\end{equation*}
and 
\begin{equation*}
\begin{aligned}
\partial E^+_\phi = \partial E^-_\phi = \Gamma_\phi.
\end{aligned}
\end{equation*}

Moreover, any point $p\in \Gamma _\phi $ is both the limit of a sequence  $(p_h)_h \subset E^-_\phi $ and of a sequence $(q_h)_h \subset E^+_\phi .$ Indeed, if $p=n\phi (n)= n \cdot \exp(f(n)V),$ it is enough to choose
\begin{equation*}
\begin{aligned}
p_h = n\exp \left(\left(f(n) -\frac 1 h \right)V\right), \quad  \mbox{and} \quad q_h = n\exp \left(\left(f(n)+\frac 1 h \right)V \right).
\end{aligned}
\end{equation*}

We present a "sort" of right-invariant property of the intrinsic cones:
\begin{prop}\label{lemconi2}  Let $G=N \rtimes H$ be a metric Lie group  with $H$ 1-dimensional. Then for any $\alpha >0,$ it holds
\begin{equation*}
\begin{aligned}
ph\cdot C^+_{N,H} ( \alpha) \subset p\cdot C^+_{N,H} ( \beta),\quad \forall p\in G, h= \exp(tV)\in H, \mbox{ with } t>0,\\
ph\cdot C^-_{N,H} ( \alpha)\subset p\cdot C^-_{N,H} ( \beta),\quad \forall p\in G, h= \exp(tV)\in H, \mbox{ with } t<0,\\
\end{aligned}
\end{equation*}
for $\beta \geq \alpha +2.$
\end{prop}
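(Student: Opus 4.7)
The plan is to exploit left-invariance to reduce to the case $p=1$, then use the semidirect product structure to compute $\pi_N(hg')$ and $\pi_H(hg')$ explicitly, and finally estimate the $N$-projection against the $H$-projection via the triangle inequality combined with a monotonicity argument along the one-parameter subgroup $H$.

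First, since $d$ is left-invariant, it suffices to establish $h\cdot C^+_{N,H}(\alpha)\subset C^+_{N,H}(\alpha+2)$ whenever $h=\exp(tV)$ with $t>0$. Given $g'\in C^+_{N,H}(\alpha)$, I would decompose $g'=n'\exp(s'V)$ with $n'=\pi_N(g')\in N$ and $s'\geq 0$ (because $g'\in S^+_G(N,H)$), so that $d(1,n')\leq\alpha\rho(s')$, writing $\rho(r):=d(1,\exp(rV))$. Using the normality of $N$, the identity $hn'=C_h(n')\,h$ yields
\[
hg'=C_h(n')\exp\bigl((t+s')V\bigr),
\]
so that $\pi_N(hg')=C_h(n')$ and $\pi_H(hg')=\exp((t+s')V)$. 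Since $t>0$ and $s'\geq 0$, we get $t+s'>0$, hence $hg'\in S^+_G(N,H)$.

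For the cone condition I would apply the triangle inequality, left-invariance of $d$, and the symmetry $d(1,h^{-1})=d(1,h)$ (itself a direct consequence of left-invariance) to obtain
\[
d(1,\pi_N(hg'))=d(1,hn'h^{-1})\leq 2\,d(1,h)+d(1,n')\leq 2\rho(t)+\alpha\rho(s').
\]
The main obstacle will be to compare this bound with $d(1,\pi_H(hg'))=\rho(t+s')$: I would need the monotonicity $\rho(r_1)\leq\rho(r_2)$ whenever $0\leq r_1\leq r_2$, so that both $\rho(t)\leq\rho(t+s')$ and $\rho(s')\leq\rho(t+s')$. Granted this non-decreasing behaviour of the left-invariant distance along $H$, one concludes
\[
d(1,\pi_N(hg'))\leq(\alpha+2)\,\rho(t+s')=\beta\,d(1,\pi_H(hg')),
\]
proving $hg'\in C^+_{N,H}(\beta)$. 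The second inclusion, corresponding to $t<0$, follows by the mirror argument with $s'\leq 0$ throughout, using $S^-_G(N,H)$ in place of $S^+_G(N,H)$.
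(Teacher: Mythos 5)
Your argument follows the paper's proof almost line by line: reduction to $p=1$ by left-invariance, the computation $\pi_N(hg')=C_h(n')$ and $\pi_H(hg')=\exp((t+s')V)$ via normality of $N$, and the bound $d(1,C_h(n'))\leq 2d(1,h)+d(1,n')$ from the triangle inequality together with $d(1,h^{-1})=d(1,h)$. The single point of divergence is the final comparison with $d(1,\pi_H(hg'))$: you invoke monotonicity of $\rho(r):=d(1,\exp(rV))$ on $[0,+\infty)$, explicitly flagging it as an unproved assumption, whereas the paper invokes the additivity $d(1,h)+d(1,\ell)=d(1,h\ell)$ for $h=\exp(tV)$, $\ell=\exp(sV)$ with $s,t>0$. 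Since additivity of a nonnegative function implies your monotonicity, your hypothesis is strictly weaker, and under it the estimate $2\rho(t)+\alpha\rho(s')\leq(\alpha+2)\rho(t+s')$ does close the argument. That said, the assumption you flag is a genuine gap relative to the stated hypotheses: a left-invariant distance inducing the topology need not have $r\mapsto d(1,\exp(rV))$ non-decreasing (already on $G=H=\mathbb{R}$ the translation-invariant metric with $\rho(r)=|\sin r|+\min(|r|,1)$ is symmetric, subadditive, vanishes only at $0$, yet is not monotone). The same objection applies a fortiori to the equality used in the paper, which fails even for $\rho(r)=\min(|r|,1)$. So your proposal is no less complete than the published proof, and is arguably more careful in that it isolates the minimal extra property of $d$ restricted to $H$ that the argument actually needs; to make either proof rigorous one must add such a hypothesis (e.g.\ that $d$ restricted to the one-parameter subgroup $H$ is, at least locally, comparable to $|t-s|$ in a monotone way), or verify it in the intended class of examples.
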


  \begin{proof}
Fix $\alpha >0.$ By left translation invariant and Remark \ref{27remsuiconi}, it is sufficient to show that
\begin{equation*}
\begin{aligned}
h\cdot C^+_{N,H} ( \alpha) \subset C^+_{N,H} ( \alpha +2),\quad  \mbox{ for all } h= \exp(tV)\in H, \mbox{ with } t>0.\\
\end{aligned}
\end{equation*}
Let $p= m\ell \in C^+_{N,H} ( \alpha),$ we want to prove that $hp\in C^+_{N,H} ( \alpha +2).$ 
 
 Using the fact that $N$ is normal, it follows that
 \begin{equation*}
\pi_N(hp)= C_h(m), \quad \pi_H(hp)= h\ell.
\end{equation*}
Moreover, by definition of $ C^+_{N,H} ( \alpha),$ we have that $d(1,m) \leq \alpha d(1,\ell)$ and so
 \begin{equation}\label{disealpha1}
d(1,C_h(m) ) \leq 2d(1,h) + d(1,m) \leq (2+\alpha) [d(1,h)+d(1,\ell)].
\end{equation}
Finally, observing that
\begin{equation*}
h\ell = \exp(tV) \exp(sV) = \exp((t+s)V),
\end{equation*}
with $s,t >0$ by hypothesis, we get that $d(1,h)+d(1,\ell) =d(1,h\ell).$ Putting together this fact and  \eqref{disealpha1}   we obtain the thesis.

  \end{proof}  

Now we are able to prove the main result of this paper:
 \begin{theorem}\label{prop1codim}
 Let $G=N \rtimes H$ be a metric group  with $H$ 1-dimensional and $\pi_H:G \to H$ Lipschitz. Let $\phi:N \to H$ be a continuous  map and $L>0$. Then the following statements are equivalent:
 \begin{enumerate}
 \item $\phi$ is intrinsically $L$-Lipschitz;
 \item for all $m\in N,$ it holds 
 \begin{equation}\label{conihalfspaces}
 m\phi (m)\cdot C^+_{N,H} ( 1/L) \subset \overline{E^+_\phi },\quad \mbox{ and } \quad m\phi (m)\cdot C^-_{N,H} ( 1/L) \subset \overline{E^-_\phi }.
\end{equation}

\end{enumerate}
  \end{theorem}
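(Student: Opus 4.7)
The plan is to reduce both implications to the explicit form of the projections of $p^{-1}q$. Using normality of $N$ in $G$, one computes for $p=m\phi(m)\in\Gamma_\phi$ and $q=m'h'$ with $m'=\pi_N(q)$, $h'=\pi_H(q)$,
\begin{equation*}
\pi_N(p^{-1}q)=C_{\phi(m)^{-1}}(m^{-1}m'),\qquad \pi_H(p^{-1}q)=\phi(m)^{-1}h'.
\end{equation*}
In particular, when $q=r=m'\phi(m')\in\Gamma_\phi$, the $H$-component is $\exp((f(m')-f(m))V)$, and when $q=m'\exp(sV)$, it is $\exp((s-f(m))V)$. I also use that $\rho(t):=d(1,\exp(tV))$ is strictly increasing and continuous on $[0,\infty)$, which follows from the length-like additivity $\rho(t+s)=\rho(t)+\rho(s)$ along the $1$-parameter subgroup already exploited in the proof of Proposition~\ref{lemconi2}.

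For $(1)\Rightarrow(2)$ I fix $p=m\phi(m)$ and take $q=m'\exp(sV)\in p\cdot C^+_{N,H}(1/L)$. The cone conditions give $s\geq f(m)$ and
\begin{equation*}
d(1,\pi_N(p^{-1}q))\leq L^{-1}\rho(s-f(m)).
\end{equation*}
Applying the intrinsic $L$-Lipschitz condition to $p$ and $r:=m'\phi(m')\in\Gamma_\phi$, whose $p^{-1}$-projection in $N$ coincides with that of $q$, yields $\rho(|f(m')-f(m)|)\leq L\cdot d(1,\pi_N(p^{-1}q))$. Combining these gives $\rho(|f(m')-f(m)|)\leq\rho(s-f(m))$; strict monotonicity of $\rho$ forces $|f(m')-f(m)|\leq s-f(m)$, whence $f(m')\leq s$ in both the cases $f(m')\geq f(m)$ and $f(m')<f(m)$. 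This is exactly $q\in\overline{E^+_\phi}$; the $C^-$ inclusion is entirely symmetric.

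For $(2)\Rightarrow(1)$ I argue by contradiction. Suppose $p,r\in\Gamma_\phi$ violate the $L$-Lipschitz bound; after possibly swapping the roles of $p$ and $r$ I may assume $f(m')\geq f(m)$, so $\pi_H(p^{-1}r)\in S^+_G(N,H)$. Perturb $r$ into the open set $E^-_\phi$ by setting $r_\eps:=m'\exp((f(m')-\eps)V)$ for small $\eps>0$. The projection formulas show $\pi_N(p^{-1}r_\eps)=\pi_N(p^{-1}r)$ and $\pi_H(p^{-1}r_\eps)=\exp((f(m')-f(m)-\eps)V)\in S^+_G(N,H)$ for $\eps$ small, and continuity of $\rho$ preserves the strict inequality $\rho(f(m')-f(m)-\eps)>L\cdot d(1,\pi_N(p^{-1}r))$. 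Hence $p^{-1}r_\eps\in C^+_{N,H}(1/L)$, so hypothesis $(2)$ forces $r_\eps\in\overline{E^+_\phi}$, contradicting $r_\eps\in E^-_\phi$ by disjointness of these sets. The case $f(m')<f(m)$ is treated analogously by perturbing upward into $E^+_\phi$ and invoking the $C^-$ inclusion.

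The main obstacle I anticipate is the strict monotonicity of $\rho$: in a generic metric group the distance along a $1$-parameter subgroup need not be strictly increasing, and without it the chain $\rho(|f(m')-f(m)|)\leq\rho(s-f(m))$ yields only an equality insufficient to conclude $f(m')\leq s$ in $(1)\Rightarrow(2)$. In the present setting this is rescued by the length-additivity of $d$ along the subgroup generated by $V$; once this is in place, the rest of the argument is essentially bookkeeping with the explicit $N$- and $H$-projections of $p^{-1}q$ together with the perturbation/continuity step bridging the strict and non-strict versions of the Lipschitz inequality.
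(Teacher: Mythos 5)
Your proof is correct, but it takes a genuinely different route from the paper's. The paper funnels both implications through Proposition~\ref{coroll28dic} (the cone--graph intersection criterion): for $(1)\Rightarrow(2)$ it argues by contradiction, sliding a point of $E^-_\phi\cap\bigl(p\cdot C^+_{N,H}(1/L)\bigr)$ up along $\exp(sV)$ until it lands on $\Gamma_\phi$ inside the cone; for $(2)\Rightarrow(1)$ it passes to open cones $C_{N,H}(\alpha)$ with $\alpha<1/L$ and again invokes Proposition~\ref{coroll28dic}. You instead work directly with the explicit projections $\pi_N(p^{-1}q)=C_{\phi(m)^{-1}}(m^{-1}m')$ and $\pi_H(p^{-1}q)=\exp((s-f(m))V)$, proving $(1)\Rightarrow(2)$ constructively and $(2)\Rightarrow(1)$ by an $\eps$-perturbation off the graph into $E^\mp_\phi$. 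What your approach buys: it never uses Proposition~\ref{coroll28dic}, hence never actually uses the hypothesis that $\pi_H$ is Lipschitz --- a mild strengthening of the statement. What it costs: everything hinges on $\rho(t)=d(1,\exp(tV))$ being continuous and strictly increasing on $[0,\infty)$; you are right to flag this, and your derivation from the additivity $d(1,h\ell)=d(1,h)+d(1,\ell)$ along the one-parameter subgroup is exactly the property the paper itself assumes (it asserts $d(1,\exp(tV))=|t|$ outright in its proof and uses the additivity in Proposition~\ref{lemconi2}), so you are on no worse footing. Two small points. First, your ``after possibly swapping the roles of $p$ and $r$'' is not justified: the defining inequality \eqref{equation2312} is not symmetric in the pair, since $d(1,\pi_N(x^{-1}x'))$ and $d(1,\pi_N(x'^{-1}x))$ differ in general; but the swap is also unnecessary, because your two-case split (perturb down and use $C^+$ when $f(m')>f(m)$, perturb up and use $C^-$ when $f(m')<f(m)$, the case $f(m')=f(m)$ being vacuous) already covers everything. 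Second, you read $p\cdot C^{\pm}_{N,H}$ as imposing $\pi_H(p^{-1}q)\in\{\exp(tV):\pm t\geq0\}$ rather than the literal $\pi_H(q)\in S^{\pm}_G(N,H)$ of the displayed definition; this translation-relative reading is the one the paper's own proof uses, so it is the right interpretation, but it is worth stating explicitly.
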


 \begin{proof}
$(1) \Rightarrow (2).$ By contradiction, we assume that $ m\phi (m)\cdot C^+_{N,H} ( 1/L) \nsubseteq \overline{E^+_\phi }.$ That means that there is $n\in N$ and $t\in \R$ such that $$ n \exp (tV) \in (m\phi (m)\cdot  C^+_{N,H} ( 1/L)) \cap E^-_\phi . $$ Now, by $n \exp (tV) \in m\phi (m)\cdot  C^+_{N,H} ( 1/L)$ and notice that $d(1, \exp(tV))= |t|,$ we have that $n \exp (sV) \in m\phi (m)\cdot  C^+_{N,H} ( 1/L)$ for any $s\geq t$ and, by $n \exp (tV) \in  E^-_\phi ,$ we get that $t< f(n).$ As a consequence, for $s=f(n)>t$ we obtain a contradiction because
\begin{equation*}
 n \exp (f(n)V) \in (m\phi (m)\cdot  C^+_{N,H} ( 1/L)) \cap \Gamma_\phi \subset (m\phi (m)\cdot   C_{N,H} ( 1/L)) \cap \Gamma_\phi =\{m \phi (m)\},
\end{equation*}
where in the last equality we used Corollary \ref{coroll28dic}.

$(2) \Rightarrow (1).$ For all $0< \alpha <1/L$, it follows that
 \begin{equation*}\label{equ8}
 \begin{aligned}
m\phi (m)\cdot C_{N,H} ( \alpha) & = (m\phi (m)\cdot C^+_{N,H} ( \alpha)) \cup (m\phi (m)\cdot C^-_{N,H} ( \alpha))\\
& \subset E^+_\phi \cup E^-_\phi \cup \{m \phi (m)\}
\end{aligned}
\end{equation*}
 and, consequently, $m\phi (m)\cdot C_{N,H} ( \alpha) \cap \Gamma_\phi =\{m \phi (m)\}.$ Hence, by Corollary \ref{coroll28dic}, we obtain the thesis.
  \end{proof}

\section{Intrinsically Lipschitz maps: equivalent analytic conditions}\label{Some equivalent conditions of intrinsically Lipschitz maps} In this section, we give some equivalent conditions of intrinsically Lipschitz maps in the context of metric groups with semi-direct splitting. More precisely, the main result is Proposition \ref{propLip} which follows from the following statement:

  \begin{prop}\label{propLip29dic} Let $ (N \rtimes H,d)$ be a metric  group.   Let $\phi:N \to H$, $m \in N$ and $p =m \phi(m).$ Then the following statements are equivalent:
 \begin{enumerate}
 \item it holds \begin{equation*}\label{equ0.1}
d(1,  \phi_{p^{-1}} (n)) \leq L d(1, n), \quad \forall n\in E_{p^{-1}},
\end{equation*}
where  the map
  $\phi_q : E _q  \to H$ is defined as \eqref{defintraslfunct};
 \item it holds
 \begin{equation*}\label{equ0.2}
d(\phi(m), \phi (n)) \leq  L d(1,\pi_N (p^{-1} q )), \quad \forall n\in N \,\, \mbox{with } q= n \phi(n) \in \Gamma  _{\phi}.
\end{equation*}
\item   it holds
 \begin{equation*}
d(\phi (\pi_N (p) ), \phi (\pi_N (pn) ))   \leq  L d(1,n), \quad \forall n\in N.
\end{equation*}
\item  there is $\tilde L>0$ such that
\begin{equation*}\label{equ0.1}
d(1, q) \leq \tilde L d(1, \pi_N (q)), \quad \forall q \in \Gamma _{\phi_{p^{-1}}}. 
\end{equation*}
\item  there is $\bar L>0$ such that
\begin{equation*}
d( p ,q) \leq  \bar L d(1,\pi_N (p^{-1} q )),  \quad \forall q \in \Gamma  _{\phi}.
\end{equation*}
\item  for all $\hat L\geq L,$ it holds $$p\cdot C_{N,H} (1/\hat L) \cap \Gamma_{\phi } =\emptyset .$$
\end{enumerate} 
\end{prop}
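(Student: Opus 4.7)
\emph{Strategy.} My plan is to show that the six conditions are all rewritings of the single inequality $d(1,\pi_H(p^{-1}q)) \leq L\, d(1,\pi_N(p^{-1}q))$ for $q\in\Gamma_\phi$, modulo constants coming from triangle-inequality bookkeeping. Concretely, I will verify $(1)\Leftrightarrow(2)\Leftrightarrow(3)$, $(1)\Leftrightarrow(4)$, $(2)\Leftrightarrow(5)$, and $(2)\Leftrightarrow(6)$; together these give the complete cycle.

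\emph{Translation dictionary.} By Proposition~\ref{intr graph left translation}, $\Gamma_{\phi_{p^{-1}}} = p^{-1}\Gamma_\phi$, so each $q = n'\phi(n')\in\Gamma_\phi$ corresponds bijectively to $g := p^{-1}q \in \Gamma_{\phi_{p^{-1}}}$; setting $n := \pi_N(g)$, we have $\phi_{p^{-1}}(n) = \pi_H(g)$. Using $p = m\phi(m)$ and pushing the $H$-factor through the normal subgroup $N$ gives
\[
p^{-1}q \;=\; \phi(m)^{-1}m^{-1}n'\phi(n') \;=\; C_{\phi(m)^{-1}}(m^{-1}n')\,\phi(m)^{-1}\phi(n'),
\]
so $\pi_H(p^{-1}q) = \phi(m)^{-1}\phi(n')$ and $\pi_N(p^{-1}q) = C_{\phi(m)^{-1}}(m^{-1}n')$. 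Left invariance of $d$ then yields $d(1,\phi_{p^{-1}}(n)) = d(\phi(m),\phi(n'))$ and $d(1,g) = d(p,q)$. This dictionary transports every condition about $\phi$ at $p$ into a condition about $\phi_{p^{-1}}$ at $1$.

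\emph{Routine equivalences.} With the dictionary, $(1)\Leftrightarrow(2)$ is an immediate substitution via $n = \pi_N(p^{-1}q)$. For $(2)\Leftrightarrow(3)$ I reparametrize: given $n\in N$, set $n' := \pi_N(pn) = m\,C_{\phi(m)}(n)$; then $q := n'\phi(n')\in\Gamma_\phi$ satisfies $\pi_N(p^{-1}q) = n$, so (3) at $n$ coincides with (2) at $q$, and the reverse change of variable closes the loop. The equivalences $(1)\Leftrightarrow(4)$ and $(2)\Leftrightarrow(5)$ are pure triangle inequalities: writing $g = \pi_N(g)\phi_{p^{-1}}(\pi_N(g))$, left invariance yields $d(\pi_N(g),g) = d(1,\phi_{p^{-1}}(\pi_N(g)))$, so $d(1,g) \leq d(1,\pi_N(g)) + d(1,\phi_{p^{-1}}(\pi_N(g)))$ converts (1) into (4) with $\tilde L = L+1$, while $d(1,\phi_{p^{-1}}(n)) = d(n,q) \leq d(1,n)+d(1,q)$ converts (4) into (1) with $L = \tilde L + 1$. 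The analogous argument, combined with $d(p,q) = d(1,p^{-1}q)$, handles $(2)\Leftrightarrow(5)$.

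\emph{Cone characterization and main obstacle.} For $(2)\Leftrightarrow(6)$, reading the right-hand side of (6) as $\{p\}$ (consistent with Proposition~\ref{coroll28dic}, since the vertex $1$ always belongs to $C_{N,H}(\alpha)$), assume (2) and pick $q\in p\cdot C_{N,H}(1/\hat L)\cap\Gamma_\phi$ with $\hat L > L$: the cone condition gives $d(1,\pi_N(p^{-1}q)) \leq d(1,\pi_H(p^{-1}q))/\hat L$, and combining with (2) yields $d(1,\pi_N(p^{-1}q)) \leq (L/\hat L)\,d(1,\pi_N(p^{-1}q))$, forcing $\pi_N(p^{-1}q)=1$, hence $p^{-1}q\in H$, hence $q=p$ by uniqueness of the $N$-component of a point in $\Gamma_\phi$. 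The converse is contrapositive: failure of (2) at some $q\in\Gamma_\phi\setminus\{p\}$ produces an $\hat L > L$ with $q\in p\cdot C_{N,H}(1/\hat L)\setminus\{p\}$. The main bookkeeping obstacle throughout is precisely the dictionary identity $\pi_H(p^{-1}q)=\phi(m)^{-1}\phi(n')$: it requires pushing $m^{-1}n'$ through $\phi(m)^{-1}$ via the conjugation $C_{\phi(m)^{-1}}$, relying crucially on the normality of $N$ in $G$. Once this identity is verified, every remaining step reduces to left invariance and triangle inequalities.
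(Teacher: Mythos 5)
Your proof is correct and follows essentially the same route as the paper: you translate everything to the identity via $\phi_{p^{-1}}$, use normality of $N$ to compute $\pi_N(p^{-1}q)=C_{\phi(m)^{-1}}(m^{-1}n')$ and $\pi_H(p^{-1}q)=\phi(m)^{-1}\phi(n')$ (the paper's key identity), and close the remaining equivalences with left invariance and triangle inequalities, merely routing some of them through $(2)$ instead of $(1)$. Your reading of item $(6)$ with $\{p\}$ in place of $\emptyset$ is the right correction of a typo in the statement (the vertex $p$ always lies in both sets) and matches how the paper's own proof treats it; the one residual imprecision --- that the conclusion at the boundary value $\hat L=L$ is not actually forced by your strict-inequality argument --- is equally present in the paper's argument, so it is a defect of the statement rather than of your proof.
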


 \begin{proof} $(1) \Leftrightarrow (2).$ The algebraic expression of the translated function $\phi_{p^{-1}}$ is more explicit thanks to the fact that $N$ is normal. More precisely,
 \begin{equation}\label{lipDEFI11}
\phi_{p^{-1}}  (n_1)= (\pi_H (pn_1))^{-1} \phi( \pi_N (pn _1))= \phi(m)^{-1} \phi \left( m C_{\phi (m)}(n_1) \right), \quad \forall n_1\in N
\end{equation}
and so, if we put $n=m C_{\phi (m)}(n_1)$ and observing that $\pi_N (p^{-1} q )= \pi_N (p^{-1} n)$, we obtain the equivalence between $(1)$ and $(2)$.

 $(1) \Leftrightarrow (3).$ Since $N$ is a normal subgroup, it follows $\pi_H(m\phi (m))=\pi_H (m\phi (m)n) = \phi (m),$ for all $n\in N.$ Therefore, by left invariance of $d$ and $\phi_{p^{-1}}(1)=1$ we have that
 \begin{equation*}
 \begin{aligned}
d(\phi (\pi_N (p) ), \phi (\pi_N (pn) ))  & =  d((\pi_H(p))^{-1} \phi (m),(\pi_H(pn))^{-1} \phi (\pi_N(pn))) = d(1,  \phi_{p^{-1}} (n)), 
\end{aligned}\end{equation*}
and so the equivalence of this two statements is true.
 
$(1) \Leftrightarrow (4).$  The equivalence follows immediately from triangle inequality. 

$(2) \Leftrightarrow (5).$ The implication $(2) \Rightarrow (5)$ follows from  the left invariant property of $d$ and  triangular inequality; indeed, recall that $\pi_N (p^{-1} q )=\phi(m)^{-1} m^{-1} n \phi (m)=C_{\phi (m)^{-1}}(m^{-1} n)$
\begin{equation*}
\begin{aligned}
d(n  \phi(n), m \phi(m)) & = d( \phi(n), n^{-1} m \phi (m ))\\
& = d( \phi(m)^{-1} \phi(n), C_{\phi (m)^{-1}}(n^{-1} m))\\
 & \leq  d( \phi(m), \phi(n)) + d(1, C_{\phi (m)^{-1}}(m^{-1} n)) \\
  & \leq  (1+ L) d(1,C_{\phi (m)^{-1}}(m^{-1} n)),\\
\end{aligned}
\end{equation*}
for every  $n\in N$. On the other hand, the implication $(5) \Rightarrow (2)$ holds because
\begin{equation*}
\begin{aligned}
d(  \phi (n),  \phi(m)) & = d(  n\phi (n), n \phi(m))\\
& \leq d(  n\phi(n), m\phi(m))+ d( m\phi (m),  n\phi(m))\\
& = d(  n\phi (n), m\phi(m))+ d( C_{\phi (m)^{-1}}(n^{-1} m) , C_{\phi (m)^{-1}}(n^{-1} n))\\
 & \leq  (1+ \bar L) d(1, C_{\phi (m)^{-1}}(m^{-1} n)),\\
\end{aligned}
\end{equation*}
for every  $n\in N,$ as desired.

$(1) \Leftrightarrow (6).$ The equivalence follows observing that 
 \begin{equation*}
p\cdot C_{N,H} (1/ \hat L) \cap \Gamma_{\phi } =\{p\} \quad \Leftrightarrow \quad C_{N,H} ( 1/\hat L) \cap \Gamma _{\phi_{p^{-1}}}=\{1\}.
\end{equation*}
where  $\phi_{p^{-1}}$ is defined as in \eqref{defintraslfunct}. Indeed, by left invariant property 
\begin{equation*}
L_{p^{-1}} \left( p \cdot C_{N,H} ( 1/\hat L) \cap \Gamma_{\phi }  \right) = C_{N,H} (1/\hat L) \cap \Gamma _{\phi_{p^{-1}}}.
\end{equation*}
 \end{proof} 
 
   \begin{prop}\label{propLip} Let $ (N \rtimes H,d)$ be a metric  group  such that $\pi_{N} $ is $k$-Lipschitz at $1$.   Let $\phi:N \to H$, $m \in N$ and $p =m \phi(m).$ Then the following statements are equivalent:
 \begin{enumerate}
  \item $\phi$ is intrinsically $L$-Lipschitz at point $n \in N$ with respect to  $d$ and with constant $L>0;$
 \item it holds one of the inequality in Proposition \ref{propLip29dic}.
 \end{enumerate}
\end{prop}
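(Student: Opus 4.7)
The plan is to use Proposition \ref{propLip29dic} as a black box: since the six analytic conditions (1)--(6) listed there are already shown to be mutually equivalent (without any assumption on $\pi_N$), it suffices to match the FSSC definition of ``$\phi$ intrinsically $L$-Lipschitz at $m$'' with one of them. The natural candidate is item (2) of Proposition \ref{propLip29dic}, and the matching reduces to a direct algebraic computation in the semidirect product.

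First I would unfold \eqref{equation2312} applied with $x=p:=m\phi(m)$ and $x'=q\in\Gamma_\phi$: $\phi$ is intrinsically $L$-Lipschitz at $m$ exactly when
\[
d(1,\pi_H(p^{-1}q)) \leq L\,d(1,\pi_N(p^{-1}q))\qquad\text{for every }q\in\Gamma_\phi.
\]
Then, for $q=n\phi(n)$, I would compute the $N$- and $H$-components of $p^{-1}q=\phi(m)^{-1}m^{-1}n\phi(n)$. Using the normality of $N$, one rewrites
\[
p^{-1}q=\bigl(\phi(m)^{-1}m^{-1}n\,\phi(m)\bigr)\cdot\phi(m)^{-1}\phi(n)=C_{\phi(m)^{-1}}(m^{-1}n)\cdot \phi(m)^{-1}\phi(n),
\]
so by uniqueness of the $N\cdot H$ decomposition $\pi_N(p^{-1}q)=C_{\phi(m)^{-1}}(m^{-1}n)$ and $\pi_H(p^{-1}q)=\phi(m)^{-1}\phi(n)$. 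Left-invariance of $d$ then gives $d(1,\pi_H(p^{-1}q))=d(\phi(m),\phi(n))$, and the FSSC condition at $m$ becomes
\[
d(\phi(m),\phi(n)) \leq L\,d(1,\pi_N(p^{-1}q)) \qquad\text{for every }n\in N,\; q=n\phi(n)\in\Gamma_\phi,
\]
which is precisely item (2) of Proposition \ref{propLip29dic}.

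With this identification in place, the equivalence between ``intrinsically $L$-Lipschitz at $m$'' and each of the analytic conditions (1), (3), (4), (5), (6) is immediate from Proposition \ref{propLip29dic}. The hypothesis that $\pi_N$ is $k$-Lipschitz at $1$ is not needed to match the FSSC condition with (2), but it is what allows the result to be consistent with the geometric counterparts: it guarantees, via Proposition \ref{Defi splitting is locally Lipschitz}, that $\pi_H$ is globally Lipschitz, and hence that condition (6) (cone condition with $C_{N,H}(1/\hat L)$) and the axis-cone condition with $X_H$ in Proposition \ref{corol8.8} can be compared through Proposition \ref{FMSprop2.3.2NUOVO}. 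The only genuine subtlety is the algebra of the semidirect product in the computation of $\pi_N(p^{-1}q)$ and $\pi_H(p^{-1}q)$, and even this is routine once normality of $N$ is invoked; no real obstacle arises, since Proposition \ref{propLip29dic} already carries the analytic heart of the argument.
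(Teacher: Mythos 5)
Your proof is correct, but it takes a genuinely different route from the paper's. The paper's own proof is a one-line combination of Proposition \ref{coroll28dic} and Proposition \ref{propLip29dic}: the pointwise Lipschitz condition is first translated into the cone condition $p\cdot C_{N,H}(\alpha)\cap\Gamma_\phi=\{p\}$ via Proposition \ref{coroll28dic} (which in turn rests on Proposition \ref{corol8.8} and the cone comparison of Proposition \ref{FMSprop2.3.2NUOVO}; this is exactly where the hypothesis that $\pi_N$ is Lipschitz at $1$ enters), and then item (6) of Proposition \ref{propLip29dic} closes the loop. You instead bypass the cones entirely and identify the FSSC inequality \eqref{equation2312} at $x=p$ with item (2) of Proposition \ref{propLip29dic} through the computation $\pi_N(p^{-1}q)=C_{\phi(m)^{-1}}(m^{-1}n)$ and $\pi_H(p^{-1}q)=\phi(m)^{-1}\phi(n)$ --- the same algebra the paper uses inside the proofs of $(1)\Leftrightarrow(2)$ and $(2)\Leftrightarrow(5)$ of Proposition \ref{propLip29dic}. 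Your route is more economical and makes visible that the Lipschitz hypothesis on $\pi_N$ is not needed once one adopts \eqref{equation2312} as the pointwise definition, whereas the paper's route buys the geometric cone picture and stays consistent with the notion of intrinsic Lipschitz point imported from \cite{DDLD21} via Proposition \ref{corol8.8}. The one caution is that the paper never writes the pointwise definition explicitly: your argument is a complete proof only under the (natural) reading that ``intrinsically $L$-Lipschitz at $m$'' means \eqref{equation2312} with $x=p$ fixed and $x'$ ranging over $\Gamma_\phi$; if instead the intended notion is the section-based one of \cite{DDLD21}, which is closer to item (5), then the Lipschitz hypothesis on $\pi_N$ reappears in the translation step and your remark that it is superfluous would need to be withdrawn.
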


 \begin{proof}
	It is enough to combine Proposition \ref{coroll28dic}  and Proposition \ref{propLip29dic}.  
\end{proof}

The following result gives a relationship between intrinsically Lipschitz maps and the Lipschitz property of $\pi_H.$
\begin{prop}
Let $ (N \rtimes H,d)$ be a metric  group and let $\alpha \in (0,1) .$ Assume also that $\phi :N \to H$ is an intrinsically Lipschitz map with intrinsically Lipschitz constant not larger than $\alpha.$ Then, for any fixed $q\in \Gamma _\phi$ the projection $\pi_H |_{\Gamma _{\phi_{q^{-1}}}  \cap B(1,r)}$ is a $\frac{\alpha}{1-\alpha}$-Lipschitz map.  
\end{prop}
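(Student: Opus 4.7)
The strategy is to reduce the question to the intrinsic Lipschitz inequality \eqref{equation2312} for $\phi_{q^{-1}}$ and then use a triangle–inequality trick to isolate the $\pi_H$--piece. First, I would observe that since $q\in \Gamma_\phi$, Proposition \ref{intr graph left translation} together with the left invariance of $d$ ensures that the translated function $\phi_{q^{-1}}$ is again intrinsically Lipschitz with constant $\le \alpha$ and satisfies $\phi_{q^{-1}}(1)=1$, so that $1\in \Gamma_{\phi_{q^{-1}}}$. In particular, the Lipschitz bound we want on the restriction to $B(1,r)$ will follow from a global bound on $\Gamma_{\phi_{q^{-1}}}$.

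Next, I would exploit the normality of $N$: writing $x=n_1h_1$ and $x'=n_2h_2$ with $n_i\in N$ and $h_i\in H$, a direct computation gives $x^{-1}x'=C_{h_1^{-1}}(n_1^{-1}n_2)\,h_1^{-1}h_2$, so that $\pi_H(x^{-1}x')=h_1^{-1}h_2=\pi_H(x)^{-1}\pi_H(x')$. By left invariance of $d$, this yields the key identity
\begin{equation*}
d(\pi_H(x),\pi_H(x'))=d(1,\pi_H(x^{-1}x')),
\end{equation*}
for every $x,x'\in \Gamma_{\phi_{q^{-1}}}$.

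Now set $a=d(1,\pi_H(x^{-1}x'))$, $b=d(1,\pi_N(x^{-1}x'))$, $c=d(1,x^{-1}x')=d(x,x')$. The intrinsic Lipschitz inequality \eqref{equation2312} applied to $\phi_{q^{-1}}$ gives $a\le \alpha b$. On the other hand, writing $x^{-1}x'=\pi_N(x^{-1}x')\cdot\pi_H(x^{-1}x')$ and using the triangle inequality together with left invariance,
\begin{equation*}
b=d(1,\pi_N(x^{-1}x'))\le d(1,x^{-1}x')+d(x^{-1}x',\pi_N(x^{-1}x'))=c+a.
\end{equation*}
Combining the two yields $a\le \alpha(a+c)$, i.e.\ $a\le \tfrac{\alpha}{1-\alpha}\,c$, which is exactly the desired bound
\begin{equation*}
d(\pi_H(x),\pi_H(x'))\le \frac{\alpha}{1-\alpha}\,d(x,x').
\end{equation*}
Restricting to $\Gamma_{\phi_{q^{-1}}}\cap B(1,r)$ is then automatic.

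\textbf{Main obstacle.} There is no serious difficulty: the argument is a two–line estimate once one has the algebraic identity $\pi_H(x^{-1}x')=\pi_H(x)^{-1}\pi_H(x')$. The only point that needs care is that this identity genuinely uses the normality of $N$ in $N\rtimes H$, which is precisely the feature that makes the $\pi_H$--component of $x^{-1}x'$ free of conjugation terms; without it, the estimate would pick up unwanted commutator factors.
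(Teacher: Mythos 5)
Your proof is correct and follows essentially the same route as the paper: the heart of both arguments is the self-improving estimate $a\le \alpha\,(a+c)$, i.e.\ the intrinsic Lipschitz bound $d(1,\pi_H(\cdot))\le\alpha\,d(1,\pi_N(\cdot))$ combined with the triangle inequality $d(1,\pi_N(\cdot))\le d(1,\cdot)+d(1,\pi_H(\cdot))$, which yields the factor $\frac{\alpha}{1-\alpha}$. The only difference is presentational: the paper reduces to the case $g=1$ by invoking Proposition \ref{Defi splitting is locally Lipschitz}\,(4), whereas you work directly with $x^{-1}x'$ via the homomorphism identity $\pi_H(x^{-1}x')=\pi_H(x)^{-1}\pi_H(x')$ (valid since $N$ is normal), which makes the same reduction slightly more explicit.
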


\begin{proof}
Fix $q\in \Gamma _\phi .$ We would like to show that
\begin{equation}\label{disconalpha}
d(\pi_H (p), \pi _H(g)) \leq \frac{\alpha}{1-\alpha}  d(p,g), \quad \mbox{ for all } p,g \in \Gamma _{\phi_{q^{-1}}} \cap B(1,r).
\end{equation}

By Proposition \ref{Defi splitting is locally Lipschitz} (4), we can prove \eqref{disconalpha} with $g=1.$ Hence 
\begin{equation*}
\begin{aligned}
d(1, \pi_H(p))&  = d(1,\phi_{q^{-1}} (\pi_N(p) )) \leq \alpha d(1, \pi_N(p)) \leq \alpha (d(1, p) + d(p, \pi_N(p))) \\
&  
 \leq  \alpha (d(1, p) +  d(1, \pi_H(p))),
\end{aligned}
\end{equation*}
which gives \eqref{disconalpha}, as desired.
\end{proof}

We conclude this section noting that, as in Euclidean setting, pointwise limits of intrinsic Lipschitz functions
are intrinsic Lipschitz. 
 
  \begin{prop}\label{ascoli} Let $ (N \rtimes H,d)$ be a metric  group.   Let $\phi_h:N \to H$ be intrinsically $L$-Lipschitz  for $h\in \N$ such that
  \begin{equation*}
\lim_{h\to \infty} \phi_h (m) = \phi (m),
\end{equation*}
for all $m\in N$ with $\phi : N\to H.$ Then  $\phi$ is intrinsic L-Lipschitz.
\end{prop}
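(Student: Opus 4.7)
The plan is to pass the intrinsic Lipschitz inequality from the approximating functions $\phi_h$ to the limit $\phi$ pointwise at each pair of graph points, using continuity of the group operations and of the projections $\pi_N,\pi_H$ in the metric (topological) group $G=N\rtimes H$.

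More concretely, I would use the FSSC-style inequality \eqref{equation2312} as the working characterisation of ``intrinsically $L$-Lipschitz''. Fix two arbitrary graph points $x,x'\in\Gamma_\phi$, say $x=m\phi(m)$ and $x'=n\phi(n)$ for some $m,n\in N$. For each $h\in\N$ set
\begin{equation*}
x_h:=m\,\phi_h(m)\in\Gamma_{\phi_h},\qquad x'_h:=n\,\phi_h(n)\in\Gamma_{\phi_h}.
\end{equation*}
By hypothesis $\phi_h(m)\to\phi(m)$ and $\phi_h(n)\to\phi(n)$, and left translation in the metric group is continuous (multiplication is continuous in any topological group, hence in any metric group), so $x_h\to x$ and $x'_h\to x'$ in $G$.

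Next I would invoke the intrinsic $L$-Lipschitz property of each $\phi_h$, which, applied to the pair $(x_h,x'_h)\in\Gamma_{\phi_h}\times\Gamma_{\phi_h}$, gives
\begin{equation*}
d\bigl(1,\pi_H(x_h^{-1}x'_h)\bigr)\;\leq\;L\,d\bigl(1,\pi_N(x_h^{-1}x'_h)\bigr),\qquad\forall h\in\N.
\end{equation*}
Now I would let $h\to\infty$. The map $g\mapsto x_h^{-1}x'_h$ converges to $x^{-1}x'$ by continuity of inversion and multiplication; the projections $\pi_N$ and $\pi_H$ are continuous (as explicitly noted earlier in the paper, just before Remark 1.1), and $d(1,\cdot)$ is continuous with respect to the metric topology. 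Hence both sides of the inequality pass to the limit, and we obtain
\begin{equation*}
d\bigl(1,\pi_H(x^{-1}x')\bigr)\;\leq\;L\,d\bigl(1,\pi_N(x^{-1}x')\bigr).
\end{equation*}
Since $x,x'\in\Gamma_\phi$ were arbitrary, this is exactly \eqref{equation2312} for $\phi$ with constant $L$, so $\phi$ is intrinsically $L$-Lipschitz.

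There is no serious obstacle: the argument is purely a continuity/passage-to-the-limit argument, and no use is made of the Lipschitz-at-$1$ hypothesis on $\pi_N$. The only point worth being careful about is that one must work with the symmetric FSSC form \eqref{equation2312} (or equivalently condition (2) or (5) of Proposition \ref{propLip29dic}) rather than with a ``cone disjointness'' formulation, so that both sides of the inequality depend continuously on the endpoints $x,x'$ and the limit is immediate.
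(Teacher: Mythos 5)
Your proof is correct and follows essentially the same route as the paper: both arguments fix $m,n\in N$, apply the intrinsic $L$-Lipschitz inequality of $\phi_h$ at the perturbed graph points $m\phi_h(m)$ and $n\phi_h(n)$ (your use of \eqref{equation2312} is exactly condition (2) of Proposition \ref{propLip29dic}, since $d(1,\pi_H(x^{-1}x'))=d(\phi(m),\phi(n))$ by left-invariance), and then let $h\to\infty$. The only difference is how the limit is justified: you invoke continuity of the group operations and of $\pi_N,\pi_H$, whereas the paper makes the passage quantitative via the triangle inequality and left-invariance alone, estimating $d(1,C_{\phi_h(m)^{-1}}(m^{-1}n))\leq 2\,d(\phi(m),\phi_h(m))+d(1,C_{\phi(m)^{-1}}(m^{-1}n))$, which avoids any appeal to continuity of the projections.
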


  \begin{proof} 
The statement follows from  the following computation
 \begin{equation*}
\begin{aligned}
d(  \phi (n),  \phi(m)) & \leq d(  \phi (n),  \phi_h(n))  + d( \phi_h(n), \phi_h(m)) + d(  \phi_h(m),  \phi(m)) \\
& \leq 2\epsilon + L  d(1, C_{\phi _h (m)^{-1}}(m^{-1} n)) \\
& \leq 2\epsilon + 2L d(  \phi (m),  \phi_h(m))+ L  d(1, C_{\phi  (m)^{-1}}(m^{-1} n)) \\
& \leq  (2+2L)\epsilon + L  d(1, C_{\phi  (m)^{-1}}(m^{-1} n)).\\
\end{aligned}
\end{equation*}
  \end{proof}

\section{Intrinsically Lipschitz vs. metric Lipschitz functions}
 It is well know that intrinsically Lipschitz maps are not metric Lipschitz maps and viceversa. In this section we present some particular case when there is a link between these two notions. In particular, the main result is Proposition \ref{propLipvs.intr}.

\subsection{$d_\phi$ quasi-distance}\label{1quasi-distance}
We fix a metric  group $ (N \rtimes H,d)$ with semidirect structure given by subgroups $N $ and $H$ with $N$ normal.
We consider the projections:
$$\pi_N  : N \rtimes H \to N \qquad \text{ and } \qquad 
\pi_H  : N \rtimes H \to H.$$
Given a function $\phi:E \subset N \to H $, we define the function $d_\phi: E \times E \to \R^+$  as
\begin{equation}\label{defidf0}
d_\phi(n_1, n_2):= \frac 1 2 \left( d(1, \pi_ N (q_1^{-1} q_2)) + d(1, \pi_ N (q_2^{-1} q_1)) \right), \quad \mbox{for all } n_1, n_2 \in E,
\end{equation}
where $q_i:= n_i \phi(n_i) $ for $i=1,2.$
Notice that the points $q_i$ are arbitrary elements of the graph $ \Gamma _\phi$ of $\phi$ (see \eqref{Phi}).

 \begin{prop}\label{propQuasidistance}
Let $ (N \rtimes H,d)$ as above and let $\phi:E \subset N \to H $ be a function.
Assume that $\phi$ is locally  intrinsically  $L$-Lipschitz  and that $\pi _H : G \to H$ is a  $C$-Lipschitz map. 
Then the map $d_\phi$, as in \eqref{defidf0}, is a quasi-distance on every relatively compact subset of $E$.  
  \end{prop}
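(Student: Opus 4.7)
The plan is to verify that $d_\phi$ satisfies the three axioms of a quasi-distance on a relatively compact subset $K \subset E$: symmetry, separation (vanishing exactly on the diagonal), and a relaxed triangle inequality with a multiplicative constant. I shall crucially exploit the decomposition of $\pi_N$ on products afforded by the semidirect structure, together with the intrinsic Lipschitz hypothesis to convert estimates involving $\pi_H$ into estimates involving $\pi_N$.

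Symmetry is immediate from \eqref{defidf0}. For the separation property, I would expand, using normality of $N$, the identity
\begin{equation*}
q_1^{-1} q_2 = C_{\phi(n_1)^{-1}}(n_1^{-1} n_2) \cdot \phi(n_1)^{-1} \phi(n_2),
\end{equation*}
so that $\pi_N(q_1^{-1} q_2) = C_{\phi(n_1)^{-1}}(n_1^{-1} n_2)$. Since $C_{\phi(n_1)^{-1}}$ is a bijection of $N$, this element equals $1$ if and only if $n_1 = n_2$, which gives $d_\phi(n_1, n_2) = 0$ if and only if $n_1 = n_2$.

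For the quasi-triangle inequality, set $a := q_1^{-1} q_2$ and $b := q_2^{-1} q_3$, so that $q_1^{-1} q_3 = ab$. The semidirect structure yields
\begin{equation*}
\pi_N(ab) = \pi_N(a) \cdot C_{\pi_H(a)}(\pi_N(b)).
\end{equation*}
Using the standard estimate $d(1, C_h(m)) \leq 2 d(1, h) + d(1, m)$, which follows from left-invariance and the triangle inequality, I obtain
\begin{equation*}
d(1, \pi_N(ab)) \leq d(1, \pi_N(a)) + 2 d(1, \pi_H(a)) + d(1, \pi_N(b)).
\end{equation*}
Since $\pi_H(a) = \phi(n_1)^{-1} \phi(n_2)$, by left-invariance $d(1, \pi_H(a)) = d(\phi(n_1), \phi(n_2))$. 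The intrinsic Lipschitz hypothesis, rephrased through Proposition \ref{propLip29dic}(2), then provides $d(\phi(n_1), \phi(n_2)) \leq L \, d(1, \pi_N(a))$, whence
\begin{equation*}
d(1, \pi_N(q_1^{-1} q_3)) \leq (1 + 2L) \, d(1, \pi_N(q_1^{-1} q_2)) + d(1, \pi_N(q_2^{-1} q_3)).
\end{equation*}
The symmetric argument, applied to $q_3^{-1} q_1 = (q_3^{-1} q_2)(q_2^{-1} q_1)$, gives an analogous bound; averaging the two then yields
\begin{equation*}
d_\phi(n_1, n_3) \leq (1 + 2L) \bigl( d_\phi(n_1, n_2) + d_\phi(n_2, n_3) \bigr),
\end{equation*}
which is the required quasi-triangle inequality.

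The main obstacle I anticipate is upgrading the \emph{local} intrinsic Lipschitz hypothesis to a uniform constant valid on the whole relatively compact subset. A standard compactness and finite-covering argument, combined with the Lipschitz property of $\pi_H$ (which via Proposition \ref{Defi splitting is locally Lipschitz} secures the equivalence among the various intrinsic Lipschitz characterizations invoked above, and hence the applicability of Proposition \ref{propLip29dic}), should produce a uniform constant $L = L(K) > 0$ that can be used whenever $n_1, n_2, n_3$ lie in $K$. This is also the reason why the statement is restricted to relatively compact subsets of $E$ rather than claiming a global quasi-distance on $E$.
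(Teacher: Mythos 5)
Your proof is correct, but the key step is argued along a genuinely different route from the paper's. The paper establishes the weak triangle inequality by passing through the ambient distance: it uses the hypothesis that $\pi_H$ is $C$-Lipschitz (equivalently, $\pi_N$ is Lipschitz at $1$, via Proposition \ref{Defi splitting is locally Lipschitz}) to write $C^{-1}d(1,\pi_N(q_1^{-1}q_2))\leq d(1,q_1^{-1}q_2)=d(q_1,q_2)$, applies the honest triangle inequality of $d$ on the graph, then returns to the $N$-components via $d(1,g)\leq d(1,\pi_N(g))+d(1,\pi_H(g))$ and the intrinsic Lipschitz bound, ending with the constant $C(1+L)$. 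You instead stay entirely at the level of the $N$-components, using the algebraic identity $\pi_N(ab)=\pi_N(a)\,C_{\pi_H(a)}(\pi_N(b))$ together with the conjugation estimate $d(1,C_h(m))\leq 2d(1,h)+d(1,m)$ and Proposition \ref{propLip29dic}(2); this yields the constant $1+2L$ and, notably, makes no use of the $C$-Lipschitz hypothesis on $\pi_H$ in the quasi-triangle step (that hypothesis is really what drives the companion statement, Proposition \ref{propQuasidistance.2}, comparing $d_\phi$ with $d$ on the graph). Your argument also adds the converse separation property ($d_\phi(n_1,n_2)=0\Rightarrow n_1=n_2$, since $\pi_N(q_1^{-1}q_2)=C_{\phi(n_1)^{-1}}(n_1^{-1}n_2)$ and conjugation is a bijection of $N$), which the paper does not spell out, and it correctly identifies the role of relative compactness as securing a uniform Lipschitz constant from the local hypothesis. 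Both proofs are sound; yours is slightly more self-contained and gives a sharper constant, while the paper's is shorter because it leans on the projection-Lipschitz hypothesis already assumed in the statement.
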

 
\begin{proof}
It is easy to see that $d_\phi$ is symmetric and $n_1=n_2$ yields $d_\phi(n_1,n_1)=0.$ Hence, we just need to check the weaker triangular inequality, i.e., 
\begin{equation}\label{weaktriangineq}
d_\phi(n_1,n_2) \leq  C (1+L) \left(d_\phi(n_1,n_3)  + d_\phi(n_3,n_2) \right),
\end{equation}
for all $n_1,n_2,n_3 \in E' \Subset E.$

Fix $E' \Subset E$ and let $n_1,n_2,n_3 \in E'$ such that $q_i= n_i \phi(n_i) \in \Gamma _\phi$ for $i=1,2,3.$ Using the Lipschitz property of $\pi _H$ (see Proposition $\ref{Defi splitting is locally Lipschitz}$ (3)) and the triangular inequality, we obtain that
 \begin{equation*}
\begin{aligned}
C^{-1} d( 1, \pi_N (q_1^{-1} q_2)) & \leq  d( 1, q_1^{-1} q_2)  \leq  d( q_1, q_3) +  d( q_3, q_2) \\
& \leq d( 1, \pi_N (q_1^{-1} q_3)) + d( 1, \pi_H (q_1^{-1} q_3))  + d( 1, \pi_N (q_3^{-1} q_2)) + d( 1, \pi_H (q_3^{-1} q_2)), 
\end{aligned}
\end{equation*} 
and so, since $\phi$ is an intrinsically Lipschitz map, it follows that
 \begin{equation*}
\begin{aligned}
C^{-1} d( 1, \pi_N (q_1^{-1} q_2)) & \leq  (1+L)\left(d( 1, \pi_N (q_1^{-1} q_3)) + d( 1, \pi_N (q_3^{-1} q_2)) \right).
\end{aligned}
\end{equation*} 
In a similar way, we conclude that
 \begin{equation*}
\begin{aligned}
C^{-1} d( 1, \pi_N (q_2^{-1} q_1)) & \leq  (1+L)\left(d( 1, \pi_N (q_2^{-1} q_3)) + d( 1, \pi_N (q_3^{-1} q_1)) \right),
\end{aligned}
\end{equation*} 
and, consequently, putting together the last two inequalities, \eqref{weaktriangineq} holds. 
\end{proof}

 \begin{prop}\label{propQuasidistance.2} Under the same assumptions of Proposition $\ref{propQuasidistance}$, we have that $d_\phi$ is equivalent to the metric $d$ restricted to the graph map $\Gamma _\phi$.
 \end{prop}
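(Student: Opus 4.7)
The plan is to exhibit constants $c_1, c_2 > 0$ such that
\[
c_1 \, d_\phi(n_1, n_2) \leq d(q_1, q_2) \leq c_2 \, d_\phi(n_1, n_2)
\]
for all $n_1, n_2 \in E' \Subset E$, where $q_i := n_i \phi(n_i) \in \Gamma_\phi$. As in Proposition \ref{propQuasidistance}, restricting to a relatively compact $E'$ is what allows us to use a uniform intrinsic Lipschitz constant $L$ on $E'$.

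For the upper bound on $d(q_1, q_2)$, I would split $q_1^{-1}q_2$ via the two projections. By left invariance of $d$ and the triangle inequality,
\[
d(q_1, q_2) = d(1, q_1^{-1} q_2) \leq d(1, \pi_N(q_1^{-1} q_2)) + d(1, \pi_H(q_1^{-1} q_2)).
\]
Since $q_1, q_2 \in \Gamma_\phi$, the defining inequality \eqref{equation2312} of intrinsically $L$-Lipschitz maps gives $d(1, \pi_H(q_1^{-1} q_2)) \leq L \, d(1, \pi_N(q_1^{-1} q_2))$, whence $d(q_1, q_2) \leq (1+L) \, d(1, \pi_N(q_1^{-1} q_2))$. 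Applying the same estimate with the roles of $q_1$ and $q_2$ swapped and using the symmetry $d(q_1,q_2) = d(q_2,q_1)$, averaging yields
\[
d(q_1, q_2) \leq (1+L) \, d_\phi(n_1, n_2).
\]

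For the lower bound, I would exploit the equivalence, from Proposition \ref{Defi splitting is locally Lipschitz}, between the Lipschitzianity of $\pi_H$ and the property that $\pi_N$ is $C_3$-Lipschitz at $1$ for some $C_3 > 0$. This gives
\[
d(1, \pi_N(q_1^{-1} q_2)) \leq C_3 \, d(1, q_1^{-1} q_2) = C_3 \, d(q_1, q_2),
\]
and analogously $d(1, \pi_N(q_2^{-1} q_1)) \leq C_3 \, d(q_1, q_2)$. Averaging the two yields $d_\phi(n_1, n_2) \leq C_3 \, d(q_1, q_2)$, i.e.\ $d(q_1, q_2) \geq C_3^{-1} \, d_\phi(n_1, n_2)$, which together with the previous bound proves the equivalence with $c_1 = 1/C_3$ and $c_2 = 1+L$.

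I do not expect a serious obstacle here: the entire argument uses only one application of the intrinsic Lipschitz bound for $\phi$ and one application of the Lipschitz-at-$1$ property of $\pi_N$. The only point requiring attention is the same bookkeeping that appeared in Proposition \ref{propQuasidistance}, namely passing to a relatively compact $E'$ so that a single constant $L$ works for all pairs in question; no new quantitative estimate beyond what is already in Proposition \ref{Defi splitting is locally Lipschitz} and \eqref{equation2312} is needed.
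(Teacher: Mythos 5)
Your proof is correct and follows essentially the same route as the paper: the lower bound comes from the Lipschitz-at-$1$ property of $\pi_N$ (equivalent to the Lipschitzianity of $\pi_H$ by Proposition \ref{Defi splitting is locally Lipschitz}), and the upper bound from $d(q_1,q_2)\leq(1+L)\,d(1,\pi_N(q_1^{-1}q_2))$, which the paper obtains by citing Proposition \ref{propLip}~(5) and you re-derive directly from \eqref{equation2312} --- the same computation. The constants agree up to bookkeeping (your $c_1=1/C_3$ is in fact the safe version of the paper's $c_1=2C^{-1}$), so nothing further is needed.
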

 
 \begin{proof} We would like to show that there are $c_1, c_2>0$ such that
 \begin{equation}\label{collegamento}
 c_1 d_\phi (n_1,n_2) \leq d(q_1, q_2) \leq c_2 \, d_\phi (n_1,n_2),
 \end{equation}
for every $n_1,n_2 \in E'\Subset E$ with $q_i= n_i \phi(n_i) \in \Gamma _\phi$ for $i=1,2.$
 
Fix $E'\Subset E$. Using the fact that the splitting is locally $C$-Lipschitz at $1$, we obtain that
 \begin{equation*}
\begin{aligned}
C^{-1} d( 1, \pi_N (q_1^{-1} q_2)) & \leq  d( 1, q_1^{-1} q_2), \quad \mbox{for all } n_1, n_2 \in E',
\end{aligned}
\end{equation*} 
where $q_i= n_i \phi(n_i) \in \Gamma _\phi$ for $i=1,2.$ Consequently, the left hand side of \eqref{collegamento} is satisfied with $c_1= 2C^{-1}.$

On the other side, by the intrinsically $L$-Lipschitz property of $\phi$ and Proposition \ref{propLip} (5), it follows that 
   \begin{equation*}
d(q_1,q_2) \leq (1+L) d(1, \pi _N (q_1^{-1} q_2)), \quad \mbox{for all } n_1, n_2 \in E',
\end{equation*}
where $q_i= n_i \phi(n_i) \in \Gamma _\phi$ for $i=1,2.$ Hence, the left hand side of \eqref{collegamento} is satisfied with $c_2=  L+1$ and  the proof is concluded. 
\end{proof}

\subsection{Intrinsically Lipschitz vs. metric Lipschitz functions}\label{2quasi-distance} It is a natural question to ask if intrinsically Lipschitz functions are metric Lipschitz functions provided that appropriate choices of the metrics in the domain or in the target spaces are made. The answer is almost always negative already in the particular case of the Carnot groups (see \cite[Remark 3.1.6]{FS16}, \cite[Example 3.24]{ArenaSerapioni}). 
However, something relevant can be stated in metric  groups:
\begin{prop}\label{prLipvs.intrinsicLip}
 Let  $ (N \rtimes H,d)$  be a metric  group and let $\phi: N \to H $ be an intrinsically Lipschitz function with graphing function $$\Phi: (N, d_\phi) \to (N \rtimes H,d),\quad \Phi (n):= n\phi(n), \forall n\in N,$$ where $d_\phi$ is defined as in \eqref{defidf0}.  If we also assume that  $\pi _H : N \rtimes H \to H$ is a locally Lipschitz map then, the graph map $\Phi$  is a metric Lipschitz function from  $(N, d_\phi)$ to $(N \rtimes H,d).$
  \end{prop}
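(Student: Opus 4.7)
The plan is to recognize that this proposition is an essentially immediate corollary of Proposition \ref{propQuasidistance.2}, so the proof is a short bookkeeping argument rather than a new estimate. The goal is to bound $d(\Phi(n_1),\Phi(n_2))$ by a constant multiple of $d_\phi(n_1,n_2)$. Since $\Phi(n_i)=q_i:=n_i\phi(n_i)\in\Gamma_\phi$ by definition, the quantity to estimate is simply $d(q_1,q_2)$, and this is precisely the right-hand side of the two-sided estimate \eqref{collegamento} that has already been proved.

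More concretely, I would proceed as follows. First, fix a relatively compact $E'\Subset N$ and two points $n_1,n_2\in E'$, set $q_i:=\Phi(n_i)=n_i\phi(n_i)$, and recall the definition
\begin{equation*}
d_\phi(n_1,n_2)=\tfrac{1}{2}\bigl(d(1,\pi_N(q_1^{-1}q_2))+d(1,\pi_N(q_2^{-1}q_1))\bigr).
\end{equation*}
Second, apply Proposition \ref{propLip} (5) to $\phi$, which gives $\bar L>0$ with $d(q_1,q_2)\leq\bar L\,d(1,\pi_N(q_1^{-1}q_2))$; doing the same with the roles of $q_1$ and $q_2$ swapped (and using symmetry of $d$) and averaging yields $d(q_1,q_2)\leq\bar L\,d_\phi(n_1,n_2)$. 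This is exactly the upper estimate in \eqref{collegamento}, and rewriting it in terms of $\Phi$ reads $d(\Phi(n_1),\Phi(n_2))\leq\bar L\, d_\phi(n_1,n_2)$, which is the claimed Lipschitz property of $\Phi:(N,d_\phi)\to(N\rtimes H,d)$.

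The only subtle point is reconciling the hypotheses: Proposition \ref{propQuasidistance.2} assumed $\phi$ to be locally intrinsically $L$-Lipschitz and $\pi_H$ to be globally $C$-Lipschitz, but a closer inspection of its proof shows that the upper bound $d(q_1,q_2)\leq c_2\,d_\phi(n_1,n_2)$ is derived purely from the intrinsic Lipschitz property of $\phi$ via Proposition \ref{propLip} (5), without invoking the Lipschitz behaviour of $\pi_H$. Hence the weaker assumption of the present proposition (namely, that $\pi_H$ is only \emph{locally} Lipschitz) is enough; the Lipschitz constant of $\pi_H$ is needed only to secure the lower bound in \eqref{collegamento}, which is not required here. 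I expect no real obstacle: the whole argument is one line once Proposition \ref{propLip} (5) is invoked and the definition of $d_\phi$ is unpacked.
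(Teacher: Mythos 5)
Your proof is correct and follows essentially the same route as the paper's, whose entire argument is to cite Propositions \ref{propQuasidistance} and \ref{propQuasidistance.2}: the claimed Lipschitz bound for $\Phi$ is precisely the upper estimate in \eqref{collegamento}, which is obtained from Proposition \ref{propLip} (5). Your further observation that only the upper half of \eqref{collegamento} is needed (so that the Lipschitz constant of $\pi_H$ enters only through the lower bound) is accurate and makes the dependence on the hypotheses slightly more transparent than the paper's one-line citation.
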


\begin{proof}
	It is enough to combine Proposition \ref{propQuasidistance} and Proposition \ref{propQuasidistance.2}. 
\end{proof}
 
  \begin{prop}
Under the same assumptions of Proposition $\ref{prLipvs.intrinsicLip}$, it follows that $\phi$ is a metric Lipschitz function from  $(N, d_\phi)$ to $(H,d) .$
  \end{prop}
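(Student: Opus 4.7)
The plan is to observe that $\phi$ factors through $\Phi$ via the projection $\pi_H$, and then to combine the Lipschitz property of $\Phi$ (supplied by Proposition \ref{prLipvs.intrinsicLip}) with the local Lipschitz property of $\pi_H$ (supplied by hypothesis).

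The key identity is that for any $n \in N$, since $n \in N$ and $\phi(n) \in H$, by uniqueness of the semidirect components we have
\begin{equation*}
\pi_H(\Phi(n)) = \pi_H(n \cdot \phi(n)) = \phi(n),
\end{equation*}
i.e. $\phi = \pi_H \circ \Phi$ as maps from $N$ to $H$.

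With this factorization in hand, I would fix a relatively compact subset $E' \Subset N$ and let $n_1,n_2 \in E'$. By Proposition \ref{prLipvs.intrinsicLip} there is a constant $C' >0$ (depending on $E'$) such that
\begin{equation*}
d(\Phi(n_1),\Phi(n_2)) \leq C' \, d_\phi(n_1,n_2).
\end{equation*}
Since $\Phi$ is continuous (indeed $d_\phi$-Lipschitz), the image $\Phi(E')$ is bounded in $(N \rtimes H, d)$, hence by the local Lipschitz hypothesis on $\pi_H$ there is $C>0$ with
\begin{equation*}
d(\pi_H(\Phi(n_1)), \pi_H(\Phi(n_2))) \leq C \, d(\Phi(n_1),\Phi(n_2)).
\end{equation*}
Chaining the two estimates and using the identity $\phi = \pi_H \circ \Phi$ yields
\begin{equation*}
d(\phi(n_1),\phi(n_2)) \leq C C' \, d_\phi(n_1,n_2),
\end{equation*}
which is exactly the asserted metric Lipschitz bound on $E'$.

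There is essentially no obstacle here: the argument is a one-line composition of two Lipschitz maps, the only subtlety being that both $\Phi$-Lipschitzness (coming from Propositions \ref{propQuasidistance} and \ref{propQuasidistance.2}) and $\pi_H$-Lipschitzness are local statements, so the conclusion is likewise understood in the local sense, consistent with the previous propositions in this subsection.
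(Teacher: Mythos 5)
Your proof is correct, but it takes a different route from the paper's. You factor $\phi = \pi_H \circ \Phi$ and chain two Lipschitz estimates: the $d_\phi$-to-$d$ Lipschitzness of the graph map $\Phi$ from Proposition \ref{prLipvs.intrinsicLip}, followed by the (local) Lipschitzness of $\pi_H$. The paper instead argues directly: it computes $\pi_N\bigl(\Phi(n)^{-1}\Phi(m)\bigr) = \phi(n)^{-1}n^{-1}m\,\phi(n)$, invokes the analytic characterization of intrinsic Lipschitzness in Proposition \ref{propLip}~(2), namely $d(\phi(n),\phi(m)) \leq L\, d\bigl(1, \pi_N(\Phi(n)^{-1}\Phi(m))\bigr)$, and then bounds the right-hand side by $2L\, d_\phi(n,m)$ straight from the definition \eqref{defidf0} of $d_\phi$. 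Your composition argument is more conceptual and essentially a one-liner given the preceding proposition, but it routes through the constant of Proposition \ref{propQuasidistance.2} and the Lipschitz constant $C$ of $\pi_H$, producing a bound of the form $C(1+L)$; the paper's direct computation bypasses $\Phi$ entirely and yields the cleaner explicit constant $2L$. Your handling of the locality (restricting to $E' \Subset N$ so that the local Lipschitz hypothesis on $\pi_H$ applies on the bounded image $\Phi(E')$) is sound and consistent with how the earlier propositions in this subsection are stated.
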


 \begin{proof}
 Notice that
 \begin{equation*} \begin{aligned}
\pi_N(\Phi (n)^{-1} \Phi (m)) & = \pi_N( \underbrace{  \phi (n)^{-1}n^{-1} m \phi (n)}_{ \in  N}  \underbrace{  \phi (n)^{-1} \phi (m)}_{ \in  H})= \phi (n)^{-1}n^{-1} m \phi (n),\\
\pi_H(\Phi (n)^{-1} \Phi (m)) & = \phi (n)^{-1} \phi (m),
\end{aligned} \end{equation*}
for any $n,m \in N$. Hence, by Proposition \ref{propLip} (2), we have that
 \begin{equation*} \begin{aligned}
d(\phi (n), \phi (m)) \leq L d(1, \phi (n)^{-1}n^{-1} m \phi (n)) \leq 2L d_\phi (n,m), \quad \forall n, m \in N,\\
\end{aligned} \end{equation*}
as desired.
\end{proof}

 We stress that in general it is impossible to find a $unique$ quasi distance independent of $\phi :M \to W$ working for all the intrinsic Lipschitz functions. On the other hand, this is true exactly when the codomain $W$ is a normal subgroup:
 
  \begin{prop}\label{propLipvs.intr}  Let  $ (M \ltimes W,d)$  be a metric  group and let $\phi: M \to W$ be a function. Then the following are equivalent:
\begin{enumerate}
\item $\phi$ is an intrinsically L-Lipschitz function;
\item the map graph $\Phi : (M, d) \to  (M \ltimes W,d)$  is a metric $\tilde L$-Lipschitz function.
\end{enumerate}
   \end{prop}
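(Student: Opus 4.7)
The plan is to exploit the fact that here, unlike elsewhere in the paper, it is the codomain $W$ that is the normal subgroup of $G = M \ltimes W$. This allows an explicit computation of $\Phi(m)^{-1}\Phi(m')$ in the decomposition $M \cdot W$, after which both (1) and (2) reduce to comparisons between the same three quantities, related by the triangle inequality and left invariance of $d$.

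Concretely, fix $m, m' \in M$ and set $\mu := m^{-1}m' \in M$. Since $W$ is normal in $G$, I can move $\mu$ past $\phi(m)^{-1}$ via conjugation to obtain
\begin{equation*}
\Phi(m)^{-1}\Phi(m') \;=\; \phi(m)^{-1}\,\mu\,\phi(m') \;=\; \mu \cdot \bigl[\,C_{\mu^{-1}}(\phi(m)^{-1})\,\phi(m')\,\bigr],
\end{equation*}
where the bracketed factor lies in $W$. Hence $\pi_M(\Phi(m)^{-1}\Phi(m')) = \mu$, and setting
\begin{equation*}
w \;:=\; \pi_W(\Phi(m)^{-1}\Phi(m')) \;=\; C_{\mu^{-1}}(\phi(m)^{-1})\,\phi(m') \;\in\; W,
\end{equation*}
together with $d(m,m') = d(1,\mu)$, condition (1) becomes exactly $d(1,w) \leq L\, d(1,\mu)$, while condition (2) becomes exactly $d(1,\mu w) = d(\Phi(m),\Phi(m')) \leq \tilde L\, d(1,\mu)$.

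The equivalence then follows from two applications of the triangle inequality combined with left invariance of $d$:
\begin{equation*}
d(1,\mu w) \;\leq\; d(1,\mu) + d(\mu,\mu w) \;=\; d(1,\mu) + d(1,w),
\end{equation*}
and symmetrically
\begin{equation*}
d(1,w) \;=\; d(\mu,\mu w) \;\leq\; d(\mu,1) + d(1,\mu w) \;=\; d(1,\mu) + d(1,\mu w).
\end{equation*}
The first estimate, combined with (1), yields (2) with constant $\tilde L = 1 + L$; the second, combined with (2), yields (1) with constant $L = 1 + \tilde L$.

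I do not foresee a serious obstacle: once the algebraic decomposition above is in place, the proof reduces to the triangle inequality. The one subtlety worth flagging is that, in contrast with Propositions~\ref{propLip29dic}--\ref{propLip}, here the graph is parametrized by the \emph{non-normal} factor $M$, so the formulas for $\pi_M$ and $\pi_W$ of the increment $\Phi(m)^{-1}\Phi(m')$ cannot be imported from earlier results and must be recomputed from scratch. This is also the structural reason why no Lipschitz assumption on any projection is needed here: the normality of $W$ alone suffices to drive the equivalence.
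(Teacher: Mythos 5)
Your proof is correct and follows essentially the same route as the paper's: the paper also decomposes $\Phi(m)^{-1}\Phi(m')$ as $a\cdot\phi_{p^{-1}}(a)$ with $a=m^{-1}m'$ (your $\mu$ and $w=C_{\mu^{-1}}(\phi(m)^{-1})\phi(m')$ are exactly its $a$ and $\phi_{p^{-1}}(a)$ from \eqref{defintraslfunct}), and then applies the triangle inequality and left invariance in both directions, obtaining the same constants $\tilde L=1+L$ and $L=1+\tilde L$. The only cosmetic difference is that you compute the $M$- and $W$-components directly rather than phrasing them via the translated function $\phi_{p^{-1}}$.
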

  
  \begin{proof}
	$(1) \Rightarrow (2).$ Fix $p=m\phi (m) \in M \ltimes W.$ The algebraic expression of the translated function $\phi_{p^{-1}}$ defined in \eqref{defintraslfunct} is more explicit thanks to the fact that $W$ is normal. More precisely, noting that
 \begin{equation*}
\pi_W(m\phi (m) a)= \pi_L( \underbrace{ ma }_{ \in  M}  \underbrace{  a^{-1}\phi (m)a}_{ \in  W})= C_{  a^{-1}} (\phi (m)), \quad \pi_M(m \phi (m) a)=ma, \quad \forall a \in M,
\end{equation*}
and so we have that	
	\begin{equation*}
\phi_{p^{-1}} (a) = C_{  a^{-1}} (\phi (m)^{-1}) \phi (ma), \quad \forall a \in M.
\end{equation*}
As a consequence, if we put $a=m^{-1} k \in M$ by the simply fact
\begin{equation*}
\begin{aligned}
 \Phi (m)^{-1} \Phi (k) = a a^{-1} \phi (m)^{-1} a \phi (ma) =a \phi_{p^{-1}} (a),
\end{aligned}
\end{equation*}
we obtain that
\begin{equation*}
\begin{aligned}
d(1, \Phi (m)^{-1} \Phi (k)) \leq d (1, a \phi_{p^{-1}} (a)) \leq (1+L) d (1, a) = (1+L) d(m,k),
\end{aligned}
\end{equation*}
as desired.

	$(2) \Rightarrow (1).$ Fix $p=m\phi (m) \in M\ltimes W.$  If we consider $a= m^{-1} k \in M,$ it follows that
\begin{equation*}
\begin{aligned}
d(1, \phi_{p^{-1}} (a) ) & = d (1, C_{  k^{-1} m} (\phi (m)^{-1}) \phi (k) )\\
& \leq d (1,  k^{-1} m) + d(1, \Phi (m)^{-1} \Phi (k))\\
& \leq (1+\tilde L) d (1, a),
\end{aligned}
\end{equation*}
i.e., by the arbitrariness of $k$, $\phi$ is intrinsically Lipschitz at point $m \in M$.

	\end{proof}

\begin{rem}
Proposition \ref{propLipvs.intr} could be false when $W$ is not normal subgroup. An example of this fact is shown in \cite{FS16} in the context of Carnot groups.
\end{rem}

\begin{rem}
Under the same assumptions of Proposition \ref{propLipvs.intr}, i.e. \emph{if \, $W$ is a normal subgroup,} the quasi distance $d_\phi $ defined as in \eqref{defidf0} does not depend of a map $\phi.$ Indeed, recall that $\pi_M$ is a homomorphism, then
\begin{equation*}
\begin{aligned}
\pi_M(\Phi (k)^{-1} \Phi (m))= k^{-1}m,
\end{aligned}
\end{equation*}
and so
\begin{equation*}
d_\phi (m,k) = d(m,k),  \quad \forall k,m  \in M.
\end{equation*}

\end{rem}

\section{Intrinsic graph as a subgroup} In this section, we present some explicit computations  about intrinsically Lipschitz graphs when they are subgroups of a metric group.  This section is inspired by the notion of intrinsic linear map in Carnot groups noting that here we don't have the homogeneous structure given by the intrinsic dilations.

\subsection{When $N$ is a normal subgroup}
 \begin{prop}\label{graphSubgroup}
Let $ (N \rtimes H,d)$ be a metric  group and let $\phi: N \to H$ such that  its graph $\Gamma_{\phi}$ is a subgroup of $G.$ Then, for any $n,m \in N$ it holds
\begin{enumerate}
\item $\Phi (n)^{-1} \Phi (m) =C_{\phi (n)^{-1}}(n^{-1} m) \phi (n)^{-1}\phi ( m);$
\item $\Phi (n) \Phi (m)^{-1} =nC_{\phi(n)\phi (m)^{-1}}(m^{-1}) \phi (n)\phi ( m)^{-1};$
\item $\Phi (n) \Phi (m)  = n C_{\phi (n)}( m) \phi (n) \phi (m) ;$
\item $(\Phi (n) \Phi (m))^{-1} =C_{\phi (m)^{-1}}(m^{-1} ) C_{(\phi (n)\phi (m))^{-1}}(n^{-1})  (\phi (n)\phi ( m))^{-1};$
\item $\phi (nm) = \phi(n) \phi ( C_{\phi (n)^{-1}}( m)) .$
\end{enumerate}
Moreover,
\begin{description}
\item[(a)] $\phi (C_{\phi (n)^{-1}}(n^{-1} m))= \phi (n)^{-1}\phi ( m);$
\item[(b)] $\phi (nC_{\phi(n)\phi (m)^{-1}}(m^{-1})) =  \phi (n)\phi ( m)^{-1};$
\item[(c)] $\phi (n C_{\phi (n)}( m)) =  \phi (n) \phi (m);$
\item[(d)] $\phi \left( C_{\phi (m)^{-1}}(m^{-1} ) C_{(\phi (n)\phi (m))^{-1}}(n^{-1}) \right) =   (\phi (n)\phi ( m))^{-1}.$
 \end{description}
\end{prop}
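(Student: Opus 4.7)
The plan is to split the statement in two logically independent halves. Items (1)--(4) are pure algebraic identities in the semidirect product $N\rtimes H$ that hold for \emph{any} map $\phi:N\to H$ (the subgroup hypothesis on $\Gamma_\phi$ is not needed), while items (a)--(d) and (5) will then be forced by combining (1)--(4) with the subgroup hypothesis via uniqueness of the components along $N$ and $H$.

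For (1)--(4), I would rely on the single ``commutation rule'' in $N\rtimes H$: for every $h\in H$ and $n\in N$ one has $hn = C_h(n)\,h$, equivalently $nh = h\,C_{h^{-1}}(n)$. Using this rule together with the standard behaviour of conjugation ($C_h$ is a group homomorphism of $N$ and $C_h\circ C_k = C_{hk}$), each of (1)--(4) reduces to a short direct computation. For instance (1) is obtained by inserting $\phi(n)\phi(n)^{-1}$ in
\[
\Phi(n)^{-1}\Phi(m) = \phi(n)^{-1} n^{-1} m\, \phi(m)
= \phi(n)^{-1}(n^{-1}m)\phi(n)\,\phi(n)^{-1}\phi(m),
\]
and (3) is just $n\phi(n)m\phi(m)=n\,\phi(n)m\phi(n)^{-1}\,\phi(n)\phi(m)$. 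Identity (2) follows the same pattern. I expect (4) to be the only place where some care is required: I would first rewrite $\Phi(n)\Phi(m)$ in the canonical $N\cdot H$ form using (3), apply the general inversion formula $(n'h')^{-1}=C_{h'^{-1}}(n'^{-1})\,h'^{-1}$ (which itself is one application of the commutation rule), and then split the inner conjugation using $C_h(ab)=C_h(a)C_h(b)$ and $C_h\circ C_k = C_{hk}$. The telescoping $C_{(\phi(n)\phi(m))^{-1}\phi(n)}=C_{\phi(m)^{-1}}$ is the one step where the bookkeeping could be expected to go wrong.

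Once (1)--(4) are established, items (a)--(d) are immediate: the subgroup hypothesis says that $\Phi(n)^{-1}\Phi(m)$, $\Phi(n)\Phi(m)^{-1}$, $\Phi(n)\Phi(m)$ and $(\Phi(n)\Phi(m))^{-1}$ all belong to $\Gamma_\phi$, hence each of them equals $k\phi(k)$ for a unique $k\in N$. Comparing with the right hand sides of (1)--(4) and using the uniqueness of the components along $N$ and $H$ (recalled in Subsection~\ref{note1}), the $N$-part identifies $k$ and the $H$-part yields precisely (a)--(d). For (5), I would start from (c): it gives $\phi\bigl(n\, C_{\phi(n)}(m')\bigr)=\phi(n)\phi(m')$ for all $n,m'\in N$. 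Setting $m':=C_{\phi(n)^{-1}}(m)$, so that $C_{\phi(n)}(m')=m$, produces exactly $\phi(nm)=\phi(n)\phi\bigl(C_{\phi(n)^{-1}}(m)\bigr)$.

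The main obstacle is therefore not conceptual but notational: keeping track of the conjugations in (4) (and hence (d)) without sign-errors. Everything else is a mechanical consequence of the commutation rule in $N\rtimes H$ and the uniqueness of the splitting, which makes the subgroup hypothesis bite exactly at the level of the $H$-components.
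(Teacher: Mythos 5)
Your proposal is correct and follows essentially the same route as the paper: both compute the $N$- and $H$-components of the four products via the commutation rule $hn=C_h(n)h$, and both obtain (a)--(d) by matching these components against $\Phi(k)=k\phi(k)$ using the subgroup hypothesis and uniqueness of the splitting. Your derivation of (5) by substituting $m'=C_{\phi(n)^{-1}}(m)$ into (c) is a slightly more direct phrasing of the paper's own change-of-variable argument, and your observation that (1)--(4) need no subgroup hypothesis is accurate.
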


 \begin{proof} Since $\Gamma_{\phi}$ is a subgroup of $G,$ we have that for every $n,m \in N$
 \begin{equation*}
\Phi (n)^{-1} \Phi (m) = \Phi (k),
\end{equation*}
for some $k\in N$ and, consequently,  the equalities $(1)-(a)$ hold noting that
 \begin{equation*}
 \begin{aligned}
k& =\pi_N(\Phi (n)^{-1} \Phi (m))=\pi_N( \underbrace{ \phi (n)^{-1} n ^{-1} m \phi (n) }_{\in N} \underbrace{ \phi (n)^{-1} \phi (m) }_{\in H} )=C_{\phi (n)^{-1}}(n^{-1} m),\\
\phi (k) & = \phi (C_{\phi (n)^{-1}}(n^{-1} m))=\pi_H(\Phi (n)^{-1} \Phi (m)) = \phi (n)^{-1}\phi ( m).
\end{aligned}
\end{equation*}
In a similar way, it is possible to show the equalities $(2)-(3)-(4)$ and consequently $(b)$ and $(c).$

To prove the equality $(5),$ we observe that for any $n \in N$ and $h\in H$ there is a unique $m \in N$ such that
\begin{equation*}
n=\pi_N (hm).
\end{equation*}
More precisely, $m:=\pi _N (h^{-1}  n).$ Indeed,
 \begin{equation*}
 \begin{aligned}
\pi_N (h \pi _N (h^{-1}  n)) = \pi _N (h C_{h^{-1}}(n)) = C_{h}(C_{h^{-1}}(n))=n,
\end{aligned}
\end{equation*}
as desired. Moreover $m$ is unique because if
\begin{equation*}
\pi _N (h^{-1}  m_1) = \pi _N (h^{-1}  m_2) \end{equation*}
then, recall that $\pi _N (h^{-1}  m_1 hh^{-1} )=  C_{h^{-1}}(m_1)$, we get that $ C_{h^{-1}}(m_1)=C_{h^{-1}}(m_2)$ and so $m_1=m_2.$ Now, for any $n, k \in N$ if we put 
\begin{equation*}
m= \pi_N (\phi (n)k), \end{equation*}
by the equality $(c)$ it follows
 \begin{equation*}
 \begin{aligned}
\phi (nm) = \phi (n  \pi_N (\phi (n)k)) = \phi (n) \phi (k) = \phi (n) \phi (\pi_N( \phi (n)^{-1} m))= \phi(n) \phi ( C_{\phi (n)^{-1}}( m)),
\end{aligned}
\end{equation*}
i.e. $(5)$ is true and the proof is achieved.
 \end{proof}

 \begin{coroll}
Let $k\in\N.$ Under the same assumption of Proposition $\ref{graphSubgroup}$, if there is $C>0$ such that $$d(1, \phi (n)) \leq Cd(1,n^k), \quad \forall n\in N,$$ then $\phi$ is intrinsically $Ck$-Lipschitz.
 \end{coroll}
 
 \begin{proof}
	It is enough to combine Proposition \ref{graphSubgroup} (a) and Proposition \ref{propLip}  (2). 
\end{proof}

 \begin{coroll}
Let $k\in\N.$ Under the same assumption of Proposition $\ref{graphSubgroup}$, if there is $C>0$ such that $$d(1, \phi (n)) \leq Cd(1,n^k), \quad \forall n\in N,$$ then $\phi$ is intrinsically $Ck$-Lipschitz.
 \end{coroll}
 
 \begin{proof}
	It is enough to combine Proposition \ref{graphSubgroup} (a) and Proposition \ref{propLip}  (2). 
\end{proof}

\subsection{When $H$ is a normal subgroup}

 \begin{prop}\label{graphSubgroupHnormal}
Let $ (N \ltimes H,d)$ be a metric  group and let $\phi: N \to H$ such that its graph $\Gamma_{\phi}$ is a subgroup of $G.$ Then, for any $n,m \in N$ it holds
\begin{enumerate}
\item $\Phi (n)^{-1} \Phi (m) = n^{-1} m C_{m^{-1} n }(\phi (n)^{-1}) \phi (m );$
\item $\Phi (n) \Phi (m)^{-1} =n m ^{-1}C_{m}( \phi (n)\phi ( m)^{-1});$
\item $\Phi (n) \Phi (m)  = n m C_{m^{-1}}(  \phi (n)) \phi (m) ;$
\item $(\Phi (n) \Phi (m))^{-1} =(n m )^{-1} C_{nm}(  \phi (m)^{-1}) C_{n}(\phi (n)^{-1}).$
\end{enumerate}
Moreover,
\begin{description}
\item[(a)] $\phi (n^{-1} m)= C_{m^{-1} n }(\phi (n)^{-1}) \phi (m );$
\item[(b)] $\phi (n m ^{-1}) =  C_{m}( \phi (n)\phi ( m)^{-1});$
\item[(c)] $\phi (n  m) = C_{m^{-1}}(  \phi (n)) \phi (m);$
\item[(d)] $\phi \left( (n m )^{-1}\right) =  C_{nm}(  \phi (m)^{-1}) C_{n}(\phi (n)^{-1}).$
 \end{description}
\end{prop}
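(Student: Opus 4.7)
The plan is to verify the four identities (1)--(4) by direct algebraic manipulation in $G = N \ltimes H$, exploiting the fact that since $H$ is normal, conjugation $C_g(h) = ghg^{-1}$ sends $H$ into itself for every $g \in G$. The four statements (a)--(d) will then fall out from the subgroup hypothesis on $\Gamma_\phi$ together with the uniqueness of the decomposition $g = \pi_N(g)\pi_H(g)$.

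First I would write $\Phi(n) = n\phi(n)$ and $\Phi(m) = m\phi(m)$ and expand the left-hand sides of (1)--(4) without any manipulation, obtaining for instance $\Phi(n)^{-1}\Phi(m) = \phi(n)^{-1} n^{-1} m \phi(m)$ for (1) and $\Phi(n)\Phi(m) = n\phi(n) m \phi(m)$ for (3). The task is then to push the middle $H$-factor through an $N$-factor so as to exhibit a product in the canonical form (something in $N$)$\cdot$(something in $H$). For (3), inserting $mm^{-1}$ on either side of $\phi(n)$ gives $n\phi(n)m\phi(m) = nm(m^{-1}\phi(n)m)\phi(m) = nm\,C_{m^{-1}}(\phi(n))\phi(m)$, which is exactly the right-hand side. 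Identities (1), (2) and (4) are obtained by the same kind of commutator trick: insert $(m^{-1}n)(m^{-1}n)^{-1}$, $mm^{-1}$, or $(nm)(nm)^{-1}$ at the right spot and collect. Normality of $H$ guarantees that the conjugates appearing are genuine elements of $H$, so the right-hand sides are indeed already in canonical form.

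Once (1)--(4) are established, the statements (a)--(d) follow by a single observation. Since $\Gamma_\phi$ is a subgroup, each of $\Phi(n)^{-1}\Phi(m)$, $\Phi(n)\Phi(m)^{-1}$, $\Phi(n)\Phi(m)$ and $(\Phi(n)\Phi(m))^{-1}$ lies in $\Gamma_\phi$, so it can be written uniquely as $\Phi(k) = k\phi(k)$ for some $k \in N$. Comparing with the canonical decompositions just derived, uniqueness of the $N \ltimes H$ decomposition forces $k$ to equal the displayed $N$-factor (for example $k = n^{-1}m$ in case (1)) and $\phi(k)$ to equal the displayed $H$-factor. This immediately yields (a)--(d).

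There is essentially no conceptual obstacle here: the proof is a bookkeeping exercise in semidirect product algebra. The only care required is to avoid sign/inverse errors when computing conjugates such as $C_{nm}(\phi(m)^{-1}) C_n(\phi(n)^{-1})$ in (4), where one must inverse the full product $\Phi(n)\Phi(m) = nm\,C_{m^{-1}}(\phi(n))\phi(m)$ and then re-express the result in canonical form by repeated insertion of trivial factors of the form $gg^{-1}$. Given the purely computational nature of the claim, the proof will be short and the write-up will closely mirror the proof of Proposition~\ref{graphSubgroup}.
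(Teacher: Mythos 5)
Your proposal is correct and follows essentially the same route as the paper: expand the products, insert trivial factors $gg^{-1}$ to reach the canonical $N\cdot H$ form (normality of $H$ keeping the conjugates inside $H$), and then invoke the subgroup hypothesis together with uniqueness of the components to read off (a)--(d). The paper writes out only case (1) explicitly (via the projections $\pi_N$, $\pi_H$) and declares the remaining cases similar, exactly as you anticipate.
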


 \begin{proof} Since $\Gamma_{\phi}$ is a subgroup of $G,$ we have that for every $n,m \in N$
 \begin{equation*}
\Phi (n)^{-1} \Phi (m) = \Phi (k),
\end{equation*}
for some $k\in N$ and, consequently,  the equalities $(1)-(a)$ hold noting that
 \begin{equation*}
 \begin{aligned}
k& =\pi_N(\Phi (n)^{-1} \Phi (m))=\pi_N( \underbrace{  n ^{-1} m  }_{\in N} \underbrace{ m^{-1} n \phi (n)^{-1}  n ^{-1} m  \phi (m) }_{\in H} )=n ^{-1} m,\\
\phi (k) & = \phi (n ^{-1} m)=\pi_H(\Phi (n)^{-1} \Phi (m)) = C_{m^{-1} n }(\phi (n)) \phi (m ).
\end{aligned}
\end{equation*}
In a similar way, it is possible to show the other equalities $(2)-(3)-(4)$ and consequently $(b)-(c)-(d).$
\end{proof}

 \bibliographystyle{alpha}
\bibliography{DDLD}

\end{document}